\documentclass{amsart}

\usepackage{amsmath, amsthm, amssymb}
\usepackage{ytableau}

\newtheorem{thm}{Theorem}[section]
\newtheorem{proposition}[thm]{Proposition}
\newtheorem{cor}[thm]{Corollary}

\newtheorem{conj}[thm]{Conjecture}
\newtheorem{rmk}[thm]{Remark}
\newtheorem{lemma}[thm]{Lemma}
\newtheorem{example}[thm]{Example}
\newtheorem{definition}[thm]{Definition}
\newcommand{\M}{\widehat{\mathcal{M}}}
\newcommand{\bfr}{{\bf r}}
\newcommand{\B}{\mathcal{B}}
\newcommand{\spm}{\mathfrak{sp}_{2m}}
\newcommand{\e}{\epsilon}
\newcommand{\vare}{\varepsilon}
\newcommand{\tf}{\tilde f}
\newcommand{\te}{\tilde e}
\newcommand{\ri}{{\bf r}^{-1}}

\def\dnode#1#2{\overset{#1}{\underset{#2}{\circ}}}

\DeclareMathOperator{\outside}{outside}
\DeclareMathOperator{\inside}{inside}
\DeclareMathOperator{\size}{size}
\DeclareMathOperator{\wt}{wt}
\DeclareMathOperator{\cwt}{cwt}
\DeclareMathOperator{\SSOT}{SSOT}
\DeclareMathOperator{\shape}{shape}

\title[Crystal structure on King tableaux and SSOT]{Crystal structure on King tableaux and semistandard oscillating tableaux}
\author{Seung Jin Lee}
\date{}
\begin{document}

\begin{abstract}
In 1976, King \cite{Kin} defined certain tableaux model, called King tableaux in this paper, counting weight multiplicities of irreducible representation of the symplectic group $Sp(2m)$ for a given dominant weight. Since Kashiwara defined crystals \cite{Kas1}, it is an open problem to provide a crystal structure on King tableaux. In this paper, we present crystal structures on King tableaux and semistandard oscillating tableaux. The semistandard oscillating tableaux naturally appears as $Q$-tableaux in the symplectic version of RSK algorithms. As an application, we discuss Littlewood-Richardson coefficients for $Sp(2m)$ in terms of semistandard oscillating tableaux.  
\end{abstract}

\maketitle
\section{Introduction}

King tableau is one of the models for irreducible representation of the symplectic group $Sp(2m)$, defined by King \cite{Kin} in 1976. Compared to Kashiwara-Nakashima tableaux which is another model for the irreducible representation of $Sp(2m)$, King tableaux have advantages that definition is much simpler than Kashiwara-Nakashima tableaux hence easier to remember and count weight multiplicities. However, since the discovery of crystal theory by Kashiwara \cite{Kas1} and Kashiwara-Nakashima tableaux \cite{KN} (which naturally has a crystal structure), it is an open problem to provide a crystal structure on King tableaux. \\

In this paper, we provide such a $\spm$-crystal structure. We first present a bijection between King tableaux and another set of tableaux called semistandard oscillating tableaux which appears as $Q$-tableaux in the symplectic version of RSK algorithms (See Section \ref{app}). Semistandard oscillating tableaux are also closely related to the multiplication rule between irreducible representation of arbitrary weight and $\wedge^\ell \mathbb{C}^{2m}$ (Theorem \ref{dualpieri1}). The above-mentioned bijection is straight-forward, hence we describe the $\spm$-crystal structure on semistandard oscillating tableaux of fixed shape. \\

The main idea for imposing the crystal structure is by presenting a bijection between semistandard oscillating tableaux of empty shape, which corresponds to the irreducible representation of maximal rectangular weight, with certain symmetric matrices known to have Kirillov-Reshetikhin crystal structure by Kwon \cite{Kwo}. Then the $\spm$-crystal structure on semistandard oscillating tableaux of arbitrary shape naturally follows by proving that the bijection and the crystal operators defined on the symmetric matrices behave well enough that crystal operators $\te_i$ and $\tf_i$ on semistandard oscillating tableaux are \emph{local}. In terms of King tableaux, it means that $\te_i$ and $\tf_i$ only depend on and change positions of $i, \overline{i},i+1,\overline{i+1}$. It will be clear that the $\spm$-crystal structure on semistandard oscillating tableaux is compatible with type $A$ crystal structure on semistandard Young tableaux. \\

There are a few applications of the main result. First of all, semistandard oscillating tableaux naturally appears as a $Q$-tableaux of symplectic version of (dual) RSK algorithm, so the crystal structure on these tableaux also provides the double crystal structure on the $\spm \times \mathfrak{sp}_{2g}$-module 
$$\wedge((\mathbb{C}^{2m})^g) \cong \sum_\lambda \pi^{(2m)}_\lambda \otimes \pi^{(2g)}_{\hat \lambda'}$$
where $\lambda$ runs over all partitions which fit inside an $m\times g$ rectangle, $\pi^{(2m)}_\lambda$ is the irreducible representation of $\spm$ indexed by $\lambda$, and $\hat{\lambda}$ is a rectangular complement of $\lambda$ in the $m \times g$ rectangle. The above isomorphism is known by Howe \cite{How}.\\

Another interesting application of semistandard oscillating tableaux is related to the tensor product multiplicity of $V(\lambda)\otimes L(\mu)\downarrow^{GL(2m)}_{Sp(2m)}$ where $V(\lambda)$ is the irreducible representation of $Sp(2m)$ indexed by a partition $\lambda$ and $L(\mu)$ is the irreducible representation of $GL(2m)$ indexed by $\mu$. We conjecture that the number of $V(\nu)$ in the tensor product is the same as the number of semistandard oscillating tableaux $T$ of weight $\mu$ that begin at $\lambda$, end at $\nu$ satisfying $\te_i(T)=0$ for $i>0$ (See Conjecture \ref{chis} for details). We prove this conjecture when $\mu_1\leq 3$, or $\mu$ is a single row. Note that the description of the conjecture is really similar to those of Littlewood-Richardson coefficients, the number of $L(\nu)$ in $L(\lambda)\otimes L(\mu)$. It would be really interesting if one can calculate the number of $V(\nu)$ in $V(\lambda)\otimes V(\mu)$, the Littlewood-Richardson coefficients for $Sp(2m)$, by simply counting a certain subset of semistandard oscillating tableaux described in Conjecture \ref{chis}.\\

There are more applications. For example, since now there is a crystal structure on King tableaux, there is a natural bijection between King tableaux and Kashiwara-Nakashima tableaux of given shape. Note that Sheats \cite{She} constructed a bijection between those tableaux, and the author does not know whether these bijections are the same or not. Also King tableaux recently appeared in the study of generalized exponents, or special cases of Lusztig's $q$-analogue of weight multiplicities \cite{Lus} in \cite{LL} hence it would be interesting if combinatorics of semistandard oscillating tableaux in this paper is related to their work.  \\

The paper is structured as follows: In Section 2, King tableaux and semistandard oscillating tableaux are defined and the main theorems are explicitly stated. In Section 3, we provide background on crystals and in Section 3.4 we describe $\spm$-crystal structure on the set of semistandard oscillating tableaux. For readers who are interested in the description of crystal structure on King's tableaux can only read Section 2, Section 3.4 and the proof of Theorem 4.1. In Section 4, the bijection between King's tableaux and semistandard oscillating tableaux and the bijection between semistandard oscillating tableaux of empty shape and certain set of symmetric matrices are given. In Section 5, we translate crystal structure on the set of symmetric matrices to the semistandard oscillating tableaux of empty shape, and prove that crystal operator $\te_i$ only depends on and changes $i$-th and $(i+1)$-th components of semistandard oscillating tableaux which proves that semistandard oscillating tableaux of any given shape has a crystal structure. In Section 6, we discuss various above-mentioned applications more in depth.

\section{Definitions and the main result}
\subsection{King tableaux}
\begin{definition} For an integer $m$, a King tableau $T$ of shape $\lambda$, $\ell(\lambda)\leq m$, is a filling of Ferrers diagram of $\lambda$ with the letters of the alphabet $1<\overline 1 < 2 < \overline 2 < \cdots < m < \overline m$, such that:

\begin{enumerate}
\item the entries are weakly increasing along rows and strictly increasing down the columns
\item all entries in row $i$ are larger than, or equal to, $i$.
\end{enumerate}
\end{definition}
The condition $(1)$ is often called the column-strict condition, and we shall refer to $(2)$ as the \emph{symplectic condition}. Note that by condition $(2)$, the subtableau $T^{(i)}$ of $T$ consisting of elements less than equal to $\overline i$ has height at most $i$. The \emph{weight} of $T$ is defined by $\alpha=(\alpha_1,\ldots,\alpha_m)$ where $\alpha_i$ is the number of $i$ minus the number of $\overline i$. We also call a weight of $T$ a \emph{crystal weight} of $T$. For a partition $\mu$ with $\ell(\mu)\leq m$, let $K(\mu,m)$ be the set of King tableaux $T$ of shape $\mu$. Note that definition of King tableaux depends on $m$.

\subsection{Semistandard oscillating tableaux}
For a skew partition $\lambda/\mu$ of size $1$ placed at $(i,j)$, let $r(\lambda/\mu)$ be the \emph{row number} $i$. We also denote $r(\mu/\lambda)$ by $-i$ or simply $\overline i$, where we allow the notation $\mu/\lambda$ when $|\mu|+1=|\lambda|$.
\begin{definition}
An oscillating horizontal strip of length $\ell$ is a sequence $S$ of partitions $S=\left (\lambda_0,\ldots,\lambda_\ell \right)$ with weakly decreasing row numbers $r(\lambda_{p}/\lambda_{p-1})$. Equivalently, there exist $1 \leq j\leq \ell$ satisfying
\begin{enumerate}
\item for $p<j$, $\lambda_p\subset \lambda_{p+1}$ with $|\lambda_p|+1=|\lambda_{p+1}|$ and $r(\lambda_{p+1}/\lambda_p)$ is decreasing for $p<j$,
\item for $p\geq j$, $\lambda_{p+1}\subset \lambda_{p}$ with $|\lambda_{p+1}|+1=|\lambda_p| $ and $r(\lambda_{p}/\lambda_{p+1})$ is increasing for $p\geq j$,
\end{enumerate}  
\end{definition}
We write $\size(S)=\ell$, $\inside(S)=\lambda_0$ and $\outside(S)=\lambda_\ell$. We also denote the maximum partition $\lambda_j$ by $S_*$.  

Note that by the condition $(1)$ and $(2)$, skew partitions $\lambda_j/\lambda_0$ and $\lambda_j/\lambda_\ell$ are horizontal strips. Therefore, once $S_0,S_*,S_\ell$ are given, other $S_i$ are determined uniquely so that the row numbers are decreasing. Let $r(S)$ be the sequence $\left( r(\lambda_p/\lambda_{p-1}) \right)_{p=0}^\ell$ where $\ell$ is the size of $S$, called the row sequence of $S$. Then this sequence consists of decreasing integers.\\

For a partition $\lambda$, let $\lambda'$ be the conjugate of $\lambda$. The definition of an oscillating horizontal strip is motivated by the following rule due to Sundaram:
\begin{thm}\cite{Sun}\label{dualpieri1}
Let $\lambda$ be a partition of length $\leq g$ regarded as a dominant weight of $Sp(2g)$. Let $\pi_\lambda$ be the irreducible representation indexed by $\lambda$. Then $\pi_\lambda \otimes \wedge^\ell \mathbb{C}^{2g}$ is the sum of $\pi_\mu$ where the sum runs over all oscillating horizontal strips from $\lambda'$ to $\mu'$ of length $\ell$.
\end{thm}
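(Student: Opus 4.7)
The plan is to pass from the representation-theoretic claim to an equivalent character identity and then to establish that identity by a weight-preserving bijection via symplectic column insertion. By complete reducibility of $Sp(2g)$-modules it suffices to prove
\[
sp_\lambda \cdot e_\ell \;=\; \sum_\mu N^\mu_{\lambda,\ell}\, sp_\mu,
\]
where $sp_\nu$ is the symplectic Schur polynomial in $x_1^{\pm 1},\ldots,x_g^{\pm 1}$, the polynomial $e_\ell$ is the character of $\wedge^\ell \mathbb{C}^{2g}$ as an $Sp(2g)$-module, and $N^\mu_{\lambda,\ell}$ is the number of oscillating horizontal strips from $\lambda'$ to $\mu'$ of length $\ell$.

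The main content of the proof is a weight-preserving bijection
\[
\Phi\colon \bigl\{(T,I)\bigr\} \longrightarrow \bigsqcup_\mu \bigl\{(T',S) : \shape(T')=\mu\bigr\},
\]
where $T$ is a King tableau of shape $\lambda$ and $I=(a_1 > a_2 > \cdots > a_\ell)$ is a strictly decreasing word in the symplectic alphabet $\{1<\overline 1<\cdots<g<\overline g\}$ (equivalently a standard basis vector of $\wedge^\ell \mathbb{C}^{2g}$), while $T'$ is a King tableau of shape $\mu$ and $S$ is an oscillating horizontal strip from $\lambda'$ to $\mu'$ of length $\ell$. The map $\Phi$ inserts $a_1,\ldots,a_\ell$ into $T$ one at a time by a symplectic analogue of column insertion. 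Each insertion either appends a new box, giving an outward step of $S$, or, when it would force a violation of the symplectic condition, deletes a previously placed box, giving an inward step of $S$. The row numbers $r(\lambda_p/\lambda_{p-1})$ of the successive shape changes record the row sequence of $S$, and the turning partition $S_*$ is the shape reached just before the first deletion.

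The verification breaks into three checks: (a) every intermediate shape is a valid King tableau; (b) the row sequence is weakly decreasing, which on the outward half follows from the strict decrease of the input letters combined with the monotonicity of bumping, and on the inward half follows because a later, larger-indexed cancellation is forced to occur strictly below the previous one; (c) $\Phi$ is invertible by reverse insertion read from the row sequence of $S$ backwards, so each pair $(T',S)$ is hit with exactly the multiplicity $N^\mu_{\lambda,\ell}$ demanded by the character identity.

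The principal obstacle is the precise design of the cancellation rule: when inserting $a_p$ would place an entry less than its row index, one must identify which previously existing box to remove so that the result is both a valid King tableau and consistent with the oscillating-strip row-number pattern forced by the remaining letters. Isolating the unique correct choice, and verifying reversibility, is where the bulk of the combinatorial work lies. A less explicit but logically cleaner alternative exploits the $Sp(2g)$-decomposition $\wedge^\ell \mathbb{C}^{2g} \cong \bigoplus_{k\ge 0} \pi_{(1^{\ell-2k})}$ to reduce the claim to a column Pieri rule and then recombines the contributions by inclusion-exclusion; this bypasses the insertion algorithm but obscures the oscillating horizontal strip structure that Section~3.4 will exploit crystal-theoretically, so the bijective approach is the one I would prefer.
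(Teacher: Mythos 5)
The paper offers no proof of this statement: it is quoted verbatim from Sundaram's thesis \cite{Sun} and serves only to motivate the definition of oscillating horizontal strips, so there is no in-paper argument to compare against. On its own terms, your proposal correctly identifies the standard route --- reduce to the character identity $sp_\lambda\cdot e_\ell=\sum_\mu N^{\mu}_{\lambda,\ell}\,sp_\mu$ and realize $N^{\mu}_{\lambda,\ell}$ by a weight-preserving insertion of a strictly decreasing word into a King tableau --- which is essentially how Sundaram establishes the rule via Berele's insertion. But what you have written is a plan rather than a proof, and you say so yourself: the ``precise design of the cancellation rule'' is exactly the nontrivial content of Berele's correspondence, and without specifying it none of your checks (a)--(c) can even be begun.

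Beyond that, two specific claims are unsupported even granting some cancellation rule. First, the definition of an oscillating horizontal strip forces every outward step to precede every inward step (the row sequence is weakly decreasing with positive entries before negative ones), yet your sketch lets each of the $\ell$ insertions independently ``either append a new box or delete a previously placed box''; you never argue that once a deletion occurs, every subsequent insertion also deletes. Without that, the recorded shape sequence need not be an oscillating horizontal strip at all, and your item (b) only checks monotonicity separately on the two halves, not at the transition. Second, invertibility ``by reverse insertion'' is asserted rather than proved; undoing an insertion step that triggered a cancellation is the delicate point of the whole construction. The alternative route via $\wedge^\ell\mathbb{C}^{2g}\cong\bigoplus_{k\ge 0}\pi_{(1^{\ell-2k})}$ is likewise only gestured at: it presupposes the equally nontrivial column Pieri rule for $\pi_\lambda\otimes\pi_{(1^r)}$ and a signed inclusion--exclusion that would need its own cancelling involution. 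The strategy is sound and historically accurate, but the essential combinatorial work has been deferred, not done.
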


Now we define the semistandard oscillating tableaux.

\begin{definition}
A skew semistandard oscillating tableaux is a sequence $T$ of oscillating horizontal strips $(S_1,S_2,\ldots)$ such that $\outside(S_i)=\inside(S_{i+1})$ for $i \in \mathbb{Z}_{>0}$ and length of $S_k$ is zero for $k$ large. We denote $\inside(T)=\inside(S_1)$ and $\outside(T)=\outside(S_k)$ for $k$ large. A weight $\wt(T)$ of $T$ is a sequence $\left(\size(S_1),\size(S_2),\ldots \right)$. When $\inside(T)$ is the empty partition, then we call $T$ a semistandard oscillating tableaux, or SSOT in short. In addition, if the weight of $T$ is $(1,1,\ldots,1)$, then $T$ is called a standard oscillating tableaux, or SOT in short. We denote $\SSOT(\mu)$ a set of semistandard oscillating tableaux $T$ with $\outside(T)=\mu$.
\end{definition}
For a skew semistandard oscillating tableau $T$, define $T^{(p)}$ by $S_{p}$ and $T^{(p)}_i$ by $\lambda_i$ where $S_p=(\lambda_0,\ldots,\lambda_\ell)$. We also define $T^{(p)}_*$ by the maximum partition in $S_p$. We define a \emph{row sequence} $\bfr(T)$ by the concatenation of $r(T^{(p)})$.\\

Let $\SSOT(\mu,m)$ be a subset of $\SSOT(\mu)$ consisting of semistandard oscillating tableaux of weight $\alpha$ satisfying $\alpha_i=0$ for $i>m$. For a SSYT $T$, define $c(T)$ by the maximum of the number of columns of partitions appearing in $T$. Let $\SSOT_g(\mu,m)$ be the subset of $\SSOT(\mu,m)$ consisting of SSOT $T$ satisfying $c(T)\leq g$. For $T \in \SSOT_g(\mu,m)$, we define a \emph{crystal weight} $\beta=(\beta_1,\ldots,\beta_m)$ by $\beta_i= g-\alpha_i$ for all $i=1,\ldots,m$ where $\alpha=(\alpha_1.\ldots,\alpha_m)$ is a weight of $T$. The bijections presented in this paper will be crystal-weight preserving.

\begin{example} \label{example1}
Consider the semistandard oscillating tableau $T=\left(T^{(1)},T^{(2)},T^{(3)},T^{(4)}\right)$ where

\begin{align*}
T^{(1)}&= \left(\emptyset, \ydiagram{1},\emptyset \right)\\
T^{(2)}&= \left(\emptyset, \ydiagram{1},\ydiagram{2},\ydiagram{1} \right)\\
T^{(3)}&= \left(\ydiagram{1},\ydiagram{1,1},\ydiagram{2,1},\ydiagram{2} \right)\\
T^{(4)}&= \left(\ydiagram{2},\ydiagram{2,1},\ydiagram{3,1} \right).\\
\end{align*}
 Then $c(T)=3$ so that $T$ is an element in $\SSOT_3((3,1),4)$ and its row sequence $\bfr(T)$ is $(1 \overline 1) ( 1 1 \overline 1) ( 2 1 \overline 2) ( 2 1)$. We also have 
\begin{align*}
T^{(3)}_0&=\ydiagram{1} \\
T^{(3)}_*&=T^{(3)}_2=\ydiagram{2,1}\\
 T^{(3)}_3&=\ydiagram{2}.
\end{align*}
A weight of $T$ is $(2,3,3,2)$ and a crystal weight of $T$ is $(1,0,0,1)$ when $g=3$.
\end{example}

\subsection{RSK algorithm and symmetric matrices} Let $M_{m,n}$ be the set of $m\times n$ matrices with non-negative integer entries. In this subsection, we define column version of RSK algorithm for $M_{m,n}$ and symmetric matrices.\\
 For $M=(m_{ij}) \in M_{m,n}$, we define a two-line array $w_M$ defined by
$$w_M=\left( \begin{matrix} i_1& i_2 & \cdots & i_\ell \\ j_1 & j_2& \cdots & j_\ell \end{matrix}\right)$$
where $(1)$ $i_1\leq i_2 \leq \cdots \leq i_\ell$, $(2)$ if $i_a=i_b$ and $a<b$, then $j_a\geq j_b$, $(3)$ for each pair $(i,j)$ the number of $r$ satisfying $(i_r,j_r)=(i,j)$ is equal to $m_{ij}$.
It is clear that $M$ uniquely determines $w_M$ and conversely, a two-line array satisfying $(1)-(3)$ and $1\leq i_r \leq m$, $1\leq j_r \leq n$ for all $r$ produces a unique matrix $M$.\\

For $M \in M_{m,n}$, let $(P(M), Q(M))$ the column-RSK image of the matrix $M$. Namely, $P(M)$ is defined by $(j_\ell \rightarrow \cdots(j_3 \rightarrow ( j_2 \rightarrow j_1)))$ where $a\rightarrow S$ is the column insertion of a number $a$ into a semistandard tableau $S$. $Q(M)$ is a semistandard Young tableau defined inductively by recording shapes appearing in $(i_\ell \rightarrow \cdots(i_3 \rightarrow ( i_2 \rightarrow i_1)))$. Note that $Q(M)$ can be defined by $P(M^\top)$ where $M^\top$ is a transpose of $M$. $P(M)$ (resp. $Q(M)$) is called the $P$-tableaux (resp. $Q$-tableaux) of $M$. See \cite{Sta} for more details. \\

The column-RSK also satisfies the following Schensted's theorem \cite{Sch}: The length of the first row of $P(M)$ is the same as the length of the longest (weakly) decreasing subsequence of the second row of $w_M$.

\begin{example}\label{example2}
Let $m=n=4$ and consider the following matrix:
$$M=\begin{bmatrix}2&1&0&1 \\ 1&0 &1 &0 \\ 0&1&0&0 \\ 1&0&0&0\end{bmatrix}.$$
Then the corresponding two-line array is
$$w_M= \left(\begin{matrix} 1&1&1&1& 2&2&3&4 \\ 4&2&1&1&3&1&2&1 \end{matrix} \right).$$
Since $M$ is symmetric, $P(M)$ and $Q(M)$ are the same semistandard Young tableau, which is 
$$\begin{ytableau} 1&1&1&1&2&4\\2&3 \end{ytableau}$$
Note that Schensted's theorem holds for this example.
\end{example}

Define
$$\widehat{\mathcal{M}}_n=\{M=(m_{ij}) \in M_{n \times n} \mid m_{ij}=m_{ji} \text{ and } 2|m_{ii} \text{ for } i,j \in [n] \}.$$
Since an element $M \in \widehat{\mathcal{M}}_n$ is symmetric, we have $P(M)=Q(M)$. Moreover, the condition $2|m_{ii}$ for all $i$ is equivalent to the statement that the shape of $P(M)$ only has even rows. Indeed, this follows from the fact that a permutation obtained from \emph{standardization} \cite[page 321]{Sta} of two-line array $w_M$ has no fixed point and \cite[Exercise 7.28. a.]{Sta}. \\

For a matrix $M$ in $\M_n$, define $c(M)$ by the number of columns of $P(M)$. Let $\M_n^{g}$ be the subset of $\M_n$ consisting of matrices $M$ such that $c(M)$ is less than or equal to $2g$. Main motivation for considering $\M_m$ and $\M_m^g$ is that there are crystal structures on those sets. More precisely, Kwon \cite{Kwo} showed that $\M_m^g \otimes T_{g\Lambda_1}$ is isomorphic to Kirillov-Reshetikhin crystal ${\bf B}^{m,g}$ of type $C^{(1)}_m$ where $T_{\lambda}= \{t_\lambda \}$ is a crystal for a weight $\lambda$ with $\wt(\lambda)=\lambda$ and $\vare_i(t_\lambda)=\varphi_i(t_\lambda)=-\infty$ for $i=1,\ldots,m$. Since we only need a $\spm$-crystal structure on $\M_m^g \otimes T_{g\Lambda_1}$, this crystal structure will be described in Section 3.

\subsection{Main Theorems}
For a partition $\mu$ contained in $(m)^g$, let $\hat \mu$ be the complement partition of $\mu$ in $(m)^g$. For example, when $m=g=3$ and $\mu=(3,1)$, we have $\hat \mu=(3,2)$.\\

Now we are ready to state following main theorems:

\begin{thm} \label{main1} Let $\mu$ be a partition contained in $(m)^g$. Then there exist a crystal-weight-preserving bijection $\Psi_{m,g}$ between the following sets:
\begin{enumerate}
\item $K(\mu,m)$
\item $\SSOT_g(\hat \mu ,m)$
\end{enumerate}
\end{thm}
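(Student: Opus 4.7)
The plan is to define $\Psi_{m,g}$ by an explicit level-by-level procedure on the King tableau, and then verify that the output is a valid SSOT of the prescribed outside shape. Given $T\in K(\mu,m)$, I would process the alphabet $1,\bar 1,2,\bar 2,\ldots,m,\bar m$ one index $i$ at a time: for each $i\in\{1,\ldots,m\}$, the positions $A_i$ of $i$ and $B_i$ of $\bar i$ in $T$ are each horizontal strips (by column-strictness) lying in rows $\le i$ (by the symplectic condition), and from $A_i$ and $B_i$, together with the complementary data from the ambient rectangle $(m)^g$, I would build the $i$-th oscillating horizontal strip $S_i$ of the image. The size of $S_i$ is forced to be $g-\alpha^K_i=g-\#i+\#\bar i$: this both realises the crystal-weight identity $\alpha^K=\beta^S=g-\alpha^S$ and makes the total net change $\sum_i(\text{up}_i-\text{down}_i)$ equal $mg-|\mu|=|\hat\mu|$.

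Concretely, I expect the up-steps of $S_i$ to encode the ``missing'' cells in the first $i$ rows of $(m)^g$ relative to the subtableau of $T$ with entries $\le i$, while the down-steps encode the positions of $\bar i$ in $T$ (with rows shifted according to the running shape). With this description I would check in turn: (a) each $S_i$ is a valid oscillating horizontal strip, with weakly decreasing row numbers on the up-part and weakly increasing ones on the down-part; (b) $\outside(S_i)=\inside(S_{i+1})$ for all $i$; (c) $\outside(S_m)=\hat\mu$; (d) no partition appearing in $\Psi_{m,g}(T)$ has more than $g$ columns, so the image lies in $\SSOT_g(\hat\mu,m)$. Property (a) should reduce to monotonicity statements enforced by column-strictness of $T$, (b) is automatic from the construction, (c) is a direct cell-count in $(m)^g$, and (d) follows because every running shape is confined to the rectangle $(m)^g$.

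For the inverse I would read an SSOT $U\in\SSOT_g(\hat\mu,m)$ strip by strip: the down rows of $S_i$ dictate the positions of $\bar i$ in $T$ (after undoing the row-shift) and the up rows, via complement, determine where the $i$'s go; one then checks that the resulting array is a King tableau of shape $\mu$. The main obstacle throughout is verifying (a): the row numbers inside $S_i$ must first weakly decrease and then weakly increase, and the intermediate $\lambda_p$'s must remain honest partitions. This is the one place where the King conditions interact nontrivially with the rectangle $(m)^g$, and I expect it to require a careful case analysis of how a single row $r\le i$ of $T$ contributes to both the up- and down-parts of $S_i$, separated into cells of $A_i$, cells of $B_i$, cells outside $\mu$, and cells already processed at a lower level. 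Once this monotonicity is in place, crystal-weight preservation is immediate from the size formula for $S_i$, and bijectivity is witnessed by the explicit inverse.
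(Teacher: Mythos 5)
Your proposal matches the paper's proof in both construction and verification strategy: the paper likewise defines $\Psi_{m,g}$ by declaring $S^{(i)}_0$ and $S^{(i)}_*$ to be the (column-reversed) rectangular complements of the shapes of the subtableaux of $T$ with entries $\le \overline{i-1}$ and $\le i$ in the growing rectangles, so that up-steps sit in columns missing the letter $i$ and down-steps in columns containing $\overline{i}$, and then checks exactly your points (a)--(d), with the symplectic condition giving nonnegativity of the complement, weak row-increase giving that each running shape is a partition, and column-strictness (at most one $i$ and one $\overline{i}$ per column) giving the horizontal-strip monotonicity you flag as the main obstacle. The crystal-weight bookkeeping $\size(S_i)=g-\alpha^K_i$ and the strip-by-strip inverse are also the same.
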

\begin{thm}\label{main2} There is a $\mathfrak{sp}_{2m}$-crystal structure on both $K(\mu,m)$ and $\SSOT_g(\hat \mu,m)$ isomorphic to the crystal $\B(\mu)$ of irreducible representation of $\mathfrak{sp}_{2m}$ indexed by $\mu$.
\end{thm}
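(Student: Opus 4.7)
My plan is to reduce Theorem \ref{main2} to Kwon's crystal on symmetric matrices via the two bijections of the paper, with the key technical ingredient being a locality property of the Kashiwara operators. Since $\Psi_{m,g}$ of Theorem \ref{main1} is crystal-weight-preserving, it suffices to equip $\SSOT_g(\hat\mu,m)$ with a $\spm$-crystal structure isomorphic to $\B(\mu)$ and then pull it back to $K(\mu,m)$.

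The rectangular case $\hat\mu=\emptyset$ comes almost for free. The bijection of Section 4 between $\SSOT_g(\emptyset,m)$ and $\M_m^g$ transports Kwon's theorem that $\M_m^g\otimes T_{g\Lambda_1}$ realizes the Kirillov-Reshetikhin crystal $\B^{m,g}$ of type $C_m^{(1)}$, and under restriction to $\spm$, $\B^{m,g}$ is known to be isomorphic to $\B((g)^m)$. This defines operators $\te_i,\tf_i$ on $\SSOT_g(\emptyset,m)$ for $i=1,\ldots,m$ and settles the theorem for $\mu=(g)^m$.

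The heart of the argument is to prove that these operators are \emph{local}: for $T=(S_1,S_2,\ldots)\in \SSOT_g(\emptyset,m)$, the output $\te_i T$ (and symmetrically $\tf_i T$) agrees with $T$ on every strip $S_j$ with $j\notin\{i,i+1\}$, and the modified pair $(S'_i,S'_{i+1})$ is determined by $(S_i,S_{i+1})$ together with the boundary shapes $\inside(S_i)$ and $\outside(S_{i+1})$. I expect this to be the main obstacle, since Kwon's operators are defined through a signed bracketing on the two-line array $w_M$ of a symmetric matrix, while the column-RSK bijection $M\mapsto T$ that produces the oscillating shapes is visibly non-local. The plan is to trace, entry by entry in $w_M$, which column insertions are responsible for each shape transition in each $S_p$, and then to verify that the letter swaps $i\leftrightarrow i{+}1$ and $\overline i \leftrightarrow \overline{i{+}1}$ prescribed by Kwon correspond to perturbations of $w_M$ whose column-RSK image differs from the original only inside the strips $S_i$ and $S_{i+1}$. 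This bookkeeping is precisely what is promised in Section 5, and I expect it to reduce to a purely combinatorial statement about the interaction between column-RSK and signed bracketing.

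Once locality is established, the operators extend unambiguously to $\SSOT_g(\hat\mu,m)$ for arbitrary $\hat\mu$: one defines $\te_i$ and $\tf_i$ by the same formula involving only $S_i$ and $S_{i+1}$, and the crystal axioms (themselves local in $i$) transfer from the rectangular case. To identify the resulting crystal with $\B(\mu)$, I would count $\spm$-characters: by iterated application of Theorem \ref{dualpieri1}, the character of $\SSOT_g(\hat\mu,m)$ equals $\mathrm{ch}\,\pi^{(2m)}_\mu$, consistent with the Howe-duality decomposition $\wedge((\mathbb{C}^{2m})^g)\cong \bigoplus_\lambda \pi^{(2m)}_\lambda\otimes \pi^{(2g)}_{\hat\lambda'}$ recalled in the introduction. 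Combined with the existence and uniqueness of a highest weight element (a SSOT annihilated by every $\te_i$ with crystal weight $\mu$, directly constructible from the definitions and the locality formula), this identifies the crystal as connected and isomorphic to $\B(\mu)$, completing the proof.
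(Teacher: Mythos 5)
Your proposal follows the paper's proof essentially step for step: transport Kwon's crystal on $\M_m^g\otimes T_{g\Lambda_1}$ to $\SSOT_g(\emptyset,m)$ via $\Phi$, prove locality of $\te_i,\tf_i$ as the key technical lemma, extend to arbitrary $\hat\mu$ by the local formulas, and identify the result with $\B(\mu)$ via the unique highest weight element (the paper makes your ``the axioms transfer'' step concrete by appending a fixed completion $R$ so that $\SSOT_g(\hat\mu,m)$ embeds as a sub-$\spm$-crystal of $\SSOT_g(\emptyset,m+\ell)$, which is exactly the device your sketch implicitly needs). The only slip is labeling the rectangular case as $\B((g)^m)$ rather than $\B((m)^g)$; this does not affect the argument for general $\mu$.
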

We will give a precise crystal structure on the set of $\SSOT_g(\hat\mu, m)$ after introducing the basics of crystals in Section \ref{crystal}. The way to obtain the crystal structure on SSOT is by analyzing the case $\mu =(m)^g$ and make a bijection between $\SSOT_g(\emptyset ,m)$ with the set $\M^g_m$ which has a crystal structure.
\begin{thm} \label{cor1}Let $\mu = (m)^g$. Then there exist crystal-weight-preserving bijections between:
\begin{enumerate}
\item $K(\mu,m)$
\item $\SSOT_g(\emptyset,m)$
\item $\M^g_m\otimes T_{g\Lambda_1}$
\end{enumerate}
Moreover, there is a crystal graph $\B( (m)^g)$ whose vertices are one of the above-mentioned sets.
\end{thm}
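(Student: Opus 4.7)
The plan is to prove Theorem \ref{cor1} in three steps: specialize Theorem \ref{main1} for the equivalence (1) $\leftrightarrow$ (2), construct the bijection (2) $\leftrightarrow$ (3) via the symplectic column-RSK algorithm, and identify the resulting crystal using Kwon's theorem together with Theorem \ref{main2}.

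First I would observe that for $\mu = (m)^g$ the complement in the $m \times g$ rectangle is $\hat\mu = \emptyset$. Hence Theorem \ref{main1} specializes to give the crystal-weight-preserving bijection $\Psi_{m,g}: K((m)^g, m) \to \SSOT_g(\emptyset, m)$ for free, handling (1) $\leftrightarrow$ (2).

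Next, for the bijection between $\SSOT_g(\emptyset, m)$ and $\M_m^g$, the natural candidate runs through column-RSK on $w_M$. Given $M = (m_{ij}) \in \M_m^g$, I would split $w_M$ into blocks indexed by the first coordinate $i = 1,\ldots,m$, and for each block build an oscillating horizontal strip $S_i$ by tracking the growth and later cancellation of column-RSK shapes: entries $m_{ij}$ with $j \geq i$ from the $i$th block produce the ascending half of $S_i$ (insertions into strictly decreasing rows, which match the row-sequence description of oscillating horizontal strips), while the symmetric entries $m_{ji}$ contributed from later blocks with first coordinate $j > i$ produce the descending half. Symmetry of $M$ together with the even-diagonal condition forces the global shape sequence to begin and end at $\emptyset$, placing the output in $\SSOT(\emptyset, m)$; the constraint $c(M) \leq 2g$ translates into $c(T) \leq g$ because each symmetric pair of entries contributes two columns to $P(M)$ but only one column to any intermediate SSOT shape. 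The inverse map recovers $w_M$ by reading off the row sequence $\bfr(T)$.

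Finally, Kwon's theorem identifies $\M_m^g \otimes T_{g\Lambda_1}$ with the underlying $\spm$-crystal of the Kirillov--Reshetikhin crystal ${\bf B}^{m,g}$ of type $C^{(1)}_m$, which is isomorphic to $\B((m)^g)$. Combined with Theorem \ref{main2} at $\mu = (m)^g$, the bijections from the previous steps transport this crystal structure to $K((m)^g, m)$ and $\SSOT_g(\emptyset, m)$, making $\B((m)^g)$ the common crystal graph as asserted. The main obstacle is the middle step: verifying weight compatibility, namely that the row sums of $M$ match the strip sizes $\size(S_i)$ and that the shift by $T_{g\Lambda_1}$ absorbs cleanly into the $\beta_i = g - \alpha_i$ convention used to define crystal weights on $\SSOT_g$, and controlling the doubling $2g \leftrightarrow g$ between the column bounds on $P(M)$ and on intermediate SSOT shapes. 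Both points require careful bookkeeping through the RSK insertions and the symmetry of $M$, and this is where the bulk of the work lies.
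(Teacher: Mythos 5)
Your steps (1)$\leftrightarrow$(2) and the final identification via Kwon's theorem match the paper, but the heart of the theorem is the bijection $\SSOT_g(\emptyset,m)\leftrightarrow\M_m^g$, and there your proposal has a genuine gap. The paper does not run column-RSK directly on the blocks of $w_M$; it first \emph{standardizes} the SSOT to a standard oscillating tableau, applies the Chen--Deng--Du--Stanley--Yan bijection between standard oscillating tableaux and fixed-point-free involutions (tracked by auxiliary tableaux $V_i$ that grow by adding a box and shrink by reverse row-insertion), and then \emph{semistandardizes} the resulting two-line array. The two facts you defer are exactly the nontrivial content: (i) that each block of the semistandardized shape sequence is an oscillating horizontal strip --- the paper proves this by a descent argument (since $w(p)>w(p+1)$ within a block, $p$ is never a descent of any $V_j$, so the deletions come off in the required row order while the insertions bump in decreasing row order); and (ii) that $c(T)\le g$ corresponds to $c(M)\le 2g$, which rests on the CDDSY theorem that $c(T)$ equals the crossing number of the matching, equal to half the longest decreasing subsequence, combined with Schensted's theorem --- your heuristic that each symmetric pair contributes two columns to $P(M)$ but one to an intermediate shape is not a proof and is not literally true entry by entry. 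Your attribution of the descending half of $S_i$ to ``later blocks with first coordinate $j>i$'' is also backwards: the descending steps of $S_i$ come from the entries $m_{ij}$ with $j<i$ in the $i$-th block itself, whose symmetric partners sit in \emph{earlier} blocks.

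One further logical point: you invoke Theorem \ref{main2} to transport the crystal structure, but in the paper Theorem \ref{cor1} is proved first and Theorem \ref{main2} is deduced from it (via locality of the operators), so that appeal is circular. It is also unnecessary: once the bijections are in hand, Kwon's identification of $\M_m^g\otimes T_{g\Lambda_1}$ with the classical crystal $\B((m)^g)$ already lets you pull the structure back to the other two sets.
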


A crystal-weight of an element $M$ in $\M^g_m\otimes T_{g\Lambda_1}$ will be described together with the crystal structure on $\M^g_m\otimes T_{g\Lambda_1}$ in Section 3.\\

Theorem \ref{cor1} can be proved once we provide bijections, but to show that $\SSOT_g(\hat \mu ,m)$ for any $\mu \in (m)^g$ also has the crystal structure, we need to show that the Kashiwara operators $\te_i,\tf_i$ acting on $T=(T^{(1)},\ldots,T^{(m)}) \in \SSOT_g(\emptyset,m)$ are local: Those operators only depend on $T^{(i)}$ and $T^{(i+1)}$ (when $i=0$, they depend on $T^{(1)}$), and only change $T^{(i)}$ and $T^{(i+1)}$. We will prove such results in Section 5, and then prove Theorem \ref{main2}.
\section{Crystals and RSK algorithm}\label{crystal}

\subsection{Background} For Dynkin diagram and a weight lattice for type $C_m$, we follow notation in \cite{Lec}.
We choose to label the Dynkin diagram of $\mathfrak{sp}_{2m}$ by 
\begin{align}\label{diagram}
\dnode{}{0}\Rightarrow \dnode{}{1}-\cdots-\dnode{}{m-1}
\end{align}

and let $I$ be $\{0,1,\ldots,m-1\}$. The weight lattice $P_m$ of $\spm$ can be identified with $\mathbb{Z}^m$ with coordinate basis $\e_{\overline i}$ for $i=1,\ldots, m$. We take the following simple roots:
$$\alpha_0= 2 \e_{\overline 1} \text{ and } \alpha_i=\e_{\overline{i+1}} - \e_{\overline i}, i=1,\ldots, m-1.$$

Then the set of positive roots of $\spm$ is
$$\{ \e_{\overline i} - \e_{\overline j},\e_{\overline i} + \e_{\overline j}\mid 1\leq j <i\leq m \} \cup \{ 2\e_{\overline i} \mid 1\leq i \leq m\}$$
We also have simple coroots defined by 
$$\alpha_0^\vee= \e_{\overline 1} \text{ and } \alpha_i^\vee=\e_{\overline{i+1}} - \e_{\overline i}, i=1,\ldots, m-1$$
where we identify coweight lattice by $\mathbb{Z}^m$. Denote the set of dominant weights of $\spm$ by $P^+_m$. We have fundamental weights $\Lambda_i = \e_{\overline m} + \cdots + \e_{\overline  {i+1}}$ for $i \in I$. For $\lambda \in P^+_m$, write $\lambda$ as a linear combination of fundamental weights: $\lambda= \sum_{i=0}^{m-1} \tilde \lambda_i \Lambda_i$ with $\tilde \lambda_i \in \mathbb{N}$. We will identify a dominant weight $\lambda$ with a partition $(\lambda_{\overline {m}},\ldots, \lambda_{\overline {1}})$ where $\lambda= \lambda_{\overline 1}\e_1 + \cdots + \lambda_{\overline m} \e_{\overline m}$. For example, a partition $(2,1,1)$ will be identified with a dominant weight $\Lambda_{m-1}+\Lambda_{m-3}= 2\e_{\overline m}+\e_{\overline {m-1}}+\e_{\overline {m-2}}$.
\subsection{Crystals}
We give a brief review on crystals \cite{BS,Kas1,KH}. A \emph{crystal} of type $C_m$, also called a $\spm$-crystal, is a nonempty set $\mathcal{B}$ together with maps
\begin{align*}
\te_i,\tf_i&: \B \rightarrow \B \cup \{0\},\\
\vare_i,\varphi_i&: \B \rightarrow \mathbb{Z} \cup \{-\infty\},\\
\wt&:\mathcal{B}\rightarrow P_m \\
\end{align*}
where $i\in I$ satisfying the following conditions:
\begin{enumerate}
\item If $x,y\in \B$ then $\te_i(x)=y$ if and only if $\tf_i(y)=x$. In this case, we assume that 
$$\wt(y)=\wt(x)+\alpha_i, \quad \vare_i(y)=\vare_i(x)-1,\quad \varphi_i(y)=\varphi_i(x)+1. $$
\item We require that
$$\varphi_i(x)= \langle \wt(x),\alpha_i^\vee \rangle + \vare_i(x)$$
for all $x\in \B$ and $i\in I$. If $\varphi_i(x)=\vare_i(x)=- \infty$, we assume that $\te_i(x)=\tf_i(x)=0$.
\end{enumerate}

The operators $\te_i$ and $\tf_i$ are called crystal operators. An element $v\in B$ satisfying $\te_i(v)=0$ for any $i\in I$ is called a highest weight vector and $\wt(v)$ is called the highest weight. A $\spm$-crystal $\B$ is said to be semiregular if for any $x\in \B$ and $i\in I$, we have
$$\vare_i(x) =\text{max} \{ k \mid \te_i^k(x)\neq 0 \}, \quad \varphi(x)=\{ k \mid \tf_i^k(x)\neq 0 \}$$
Then the weight of the vertex $u$ is defined by $\text{wt}(x)=\sum_{i=0}^{m-1} ( \varphi_i(x)- \vare_i(x) ) \Lambda_i$. All $\spm$-crystals appearing in this paper are semiregular. One can associate a $I$-colored directed graph, called a \emph{crystal graph}, for a crystal $\B$ in a following way: the graph has a vertex set $\B$ and there is an edge $x\overset{i}\rightarrow y$ if and only if $\tf_i(x)=y$. \\

The theory of crystal is originated from a crystal basis for modules of a quantum group. For a dominant weight $\lambda$, let $\B(\lambda)$ be the crystal with the unique highest weight $\lambda$ corresponding to the irreducible highest weight $U_q(\spm)$-module with the highest weight $\lambda$. See \cite{KH} for details. For example, a crystal $\B(\Lambda_m)$ has the following crystal graph:
$$\overline{m} \overset{m-1}\rightarrow \overline{m-1} \overset{m-2}\rightarrow \cdots \overset{1}\rightarrow \overline{1} \overset{0}\rightarrow 1 \overset{1}\rightarrow \cdots \overset{m-1}\rightarrow m$$
where $\wt(\overline{i})= \e_{\overline i }$ and $\wt(i)=-\e_{\overline i }$.\\

The action of $\te_i$ and $\tf_i$ on a tensor product of crystals $\B\otimes \B'$ is given by:
\begin{align*}
\wt(u\otimes v)&= \wt(u)+\wt(v)\\
\tf_i(u\otimes v ) &= \begin{cases} \tf_i(u) \otimes v \text{ if } \varphi_i(u)>\vare_i(v)\\ u \otimes \tf_i(v) \text{ if } \varphi_i(u)\leq\vare_i(v) \end{cases}\\
\te_i(u\otimes v ) &= \begin{cases} \te_i(u) \otimes v \text{ if } \varphi_i(u)\geq\vare_i(v)\\ u \otimes \te_i(v) \text{ if } \varphi_i(u)<\vare_i(v) \end{cases}.
\end{align*}
A set $B$ is said to have a $\spm$-crystal structure if there is a $\spm$-crystal with the underlying set $B$.

\subsection{Crystal structure on $\M^g_m$}
Recall that $\M_m^{g}$ is the subset of $\M_m$ consisting of matrices $M$ such that the number of columns in $P(M)$ is less than or equal to $2g$. In this subsection, we give $\spm$-crystal structure on $\M_m^g$ described in \cite{Kwo}.\\

First, we define a weight $\wt(M)$ of $M \in \M_m$ by the following: a coefficient of $\e_{\overline i}$ in $\wt(M)$ is minus the $i$-th row sum of $M$. For example, the weight of $M$ in Example \ref{example2} is $-\left(4\e_{\overline{1}}+2\e_{\overline{2}}+\e_{\overline{3}}+\e_{\overline{4}}\right)$. On the other hand, an element $M \in \M_m^g$ can be identified with an element $M\otimes t_{g\Lambda_1}$ in $\M_m^g \otimes T_{g\Lambda_1}$. Here the weight of $M\otimes t_{g\Lambda_1}$ is $\wt(M)+g\Lambda_1$. We call this weight a \emph{crystal weight} of $M$, and denote by $\cwt(M)$.\\

To define crystal oprators $\te_i,\tf_i$ acting on $\M_m^g \otimes T_{g\Lambda_1}$, we recall the famous crystal operators $e_i,f_i$ for $i\geq 1$ acting on semistandard Young tableaux.
For a semistandard Young tableau $T$ with entries in $[m]$, consider its row reading word ${\bf r}(T)$ by reading entries of $T$ from the first row to the last row, from the right to the left. To define $e_i,f_i$ for $i=1,\ldots,m-1$, we delete all numbers in the row reading word ${\bf r}(T)$ except $i,i+1$. We pair all possible $(i,i+1)$ and delete those pairs to obtain a word $(i+1)\cdots(i+1) i i \cdots i$. It is well-known that as long as we keep pairing adjacent $i$ and $i+1$ among unpaired numbers, the positions of all letters in the unpaired word $(i+1)\cdots(i+1) i i \cdots i$ does not change, hence the order of pairing does not matter. The image $e_i(T)$ (resp. $f_i(T)$) is defined by replacing the last $i+1$ by $i$ (resp. the first $i$ by $i+1$) in this sequence.\\

\begin{rmk} \label{rset} The operator $e_i,f_i$ can be determined if one knows row numbers of $i$ and $i+1$. For example, if 
$$T=\begin{ytableau} 1&1&1&2&2\\2&3 \\3&4\end{ytableau}$$
then the set $rset(T,2)$ of row numbers of $2$ is $\{2,1,1\}$, written in a decreasing order, and the set $rset(T,3)$ of row numbers of $3$ is $\{3,2\}$. Then we know that a word obtained from ${\bf r}(T)$ by deleting all numbers except $2,3$ is $22323$. Note that a pairing $(i,i+1)$ corresponds to a number $p$ in $rset(T,2)$ and a number $q$ in $rset(T,3)$ with $p<q$. We have
$$f_2(T)=\begin{ytableau} 1&1&1&2&3\\2&3 \\3&4\end{ytableau}$$
and $e_2(T)=0$.
\end{rmk}

For $i=1,\ldots, m-1$ and $M \in \M_m^g \otimes T_{g\Lambda_1}$, we define $\te_i(M)$ by the unique matrix $N$ satisfying
$$P(N)=e_i (P(M)).$$
Note that we are also acting $e_i$ on the $Q$-tableau of $M$. We define $\tf_i(M)$ similarly for $i=1,\ldots, m-1$. \\

For $i=0$ and $M=(m_{ij}) \in \M_m^g \otimes T_{g\Lambda_1}$, we simply define $\te_0(M)$ by replacing $m_{11}$ by $m_{11}+2$ if the resulted matrix is still in $\M_m^g$, and zero otherwise. Similarly, $\tf_0(M)$ is obtained by subtracting 2 from $m_{11}$ unless $m_{11}=0$, and zero if $m_{11}=0$.\\

For later purposes, we describe $\te_i(M)$ for $i>0$ without mentioning $P$-tableaux of $M$. For a matrix $M$ with non-negative integer entries with the finite support, we describe a matrix $e_i^Q(M)$ such that $P(e_i^Q(M))= P(M)$ and $Q(e_i^Q(M))=Q(\te_i(M))$. In the two-array notation of $M$, first delete all entries $p \choose q$ unless $p$ is either $i$ or $i+1$. Then pair all $i \choose p$ and $i+1 \choose q$ such that $p< q$ (Compare this with Remark \ref{rset}) and delete these pairs. The leftover is
$$\left( \begin{matrix} i & \cdots &i &i+1 &\cdots &i+1 \\ a_1 &\cdots & a_{\ell_1} & b_1 & \cdots &b_{\ell_2} \end{matrix}\right)$$
where the second line is a weakly decreasing sequence, and two-line array of $e_i^Q(M)$ is obtained from $w_M$ by replacing $i+1 \choose b_1$ by $i \choose b_1$. Note that $e_i^Q(M)$ only depends on $i$-th and $i+1$-th rows of $M$.

Similarly, we can define a matrix $e_i^P(M)$ such that $P(e_i^P(M))= P(\te_i(M))$ and $Q(e_i^P(M))=Q(M)$. 
Note that $e_i^P(M)$ can be defined by the transpose of $e_i^Q(M^\top)$. It is clear that two operators $e_i^P$ and $e_i^Q$ commute, and 
$$e_i^P\circ e_i^Q(M)=\te_i(M).$$ 
Hence $e_i^P(M)$ only depends on $i$-th and $i+1$-th columns of $M$ and other columns does not change when acting $e_i^P$. Also note that when $M$ is symmetric, computing either $e_i^P(M)$ or $e_i^Q(M)$ is enough to compute $\te_i(M)$. For example, if two-line array of $e_i^Q(M)$ is obtained from $w_M$ by replacing $i+1 \choose b_1$ by $i \choose b_1$, then two-line array of $\te_i(M)$ is obtained from $w_{e^Q_i(M)}$ by replacing $b_1 \choose i+1$ by $b_1\choose i$ and rearranging.

\subsection{Crystal structure on SSOT}
For a partition $\mu$ contained in $(m)^g$, let $T=(T^{(1)},\ldots, T^{(m)}) \in \SSOT_g(\hat \mu,m)$. The purpose of this subsection is to describe $\te_i(T)$ and $\tf_i(T)$ for $i=0,1,\ldots,m-1$. As mentioned before, when $\te_i$ or $\tf_i$ is acting on $T$, the operator only depends on $T^{(i)}$ and $T^{(i+1)}$ and only changes $T^{(i)}$ and $T^{(i+1)}$. Therefore, we only need to determine $T'^{(i)}$ and $T'^{(i+1)}$ where $T'=\te_i(T)$ or $\tf_i(T)$.\\

To describe the crystal operators, we use row reading words of $T^{(i)}$ and $T^{(i+1)}$.\\

For $i=0$, $\tf_0(T)$ is obtained from $T$ by adding $1\overline{1}$ in $\bfr(T^{(1)})$ when possible, and $\te_0(T)$ is obtained from $T$ by deleting one $1$ and one $\overline{1}$ in $\bfr(T^{(1)})$ when possible. When it is impossible, we set $\tf_0(T)$ (or $\te_0(T)$ resp.) to be zero. Recall that $g$ is the maximun number of columns in any partition appearing in $T$, hence in $\bfr(T^{(1)})$ there are at most $g$ number of $1$'s. Also $i$ and $\overline{i}$ for $i>1$ do not appear in $\bfr(T^{(1)})$.\\

For $i>0$, we need a few notations for multisets. For a multiset $A$ of integers and an integer $n$, let $-A$ be the set $\{-x \mid x \in A \}$ and $A+n$ be the set $\{x+n\mid x \in A\}$. For two multisets $A$ and $B$ of integers, define a multiset $A\cap B$ so that the number of $n$ in the multiset $A\cap B$ is the mininum of the number of $n$ in $A,B$. Define a multset $A\uplus B$ by a disjoint union of $A$ and $B$. We also define a multiset $A\backslash B$ so that the number of $i$ in $A\backslash B$ is the maximum between zero and (the number of $i$ in $A$ minus the number of $i$ in $B$). One can show the equality $A=(A\backslash B) \uplus (A\cap B)$.\\

Define the multiset $A\uparrow B$ by $(A\backslash B)\uplus (A\cap B+1)$ and $A\downarrow B$ by $(A\backslash B)\uplus (A\cap B-1)$. The operations $\uparrow$ and $\downarrow$ can be reversible in a following way:

\begin{lemma} \label{updown} For multisets $A,B,C,D$ of integers,
\begin{align*}
C&=A\uparrow B\\
D&=B\uparrow A\\
\end{align*}
if and only if
\begin{align*}
A&=C\downarrow D\\
B&=D\downarrow C\\
\end{align*}
\end{lemma}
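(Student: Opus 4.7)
My plan is to prove the lemma by passing to multiplicities and establishing a single key identity, $C\cap D = (A\cap B) + 1$; everything else then follows from routine bookkeeping together with the decomposition $A = (A\backslash B)\uplus(A\cap B)$ stated just before the lemma. Because the statement is symmetric under swapping $\uparrow\leftrightarrow\downarrow$ and $(A,B)\leftrightarrow(C,D)$, I will only prove the forward direction in detail; the reverse implication follows by the same argument with the arrows reversed (and indices shifted by one).

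For each integer $n$, let $a_n, b_n, c_n, d_n$ denote the multiplicity of $n$ in $A, B, C, D$. Using the elementary identity $x-\min(x,y)=\max(0,x-y)$, unpacking the definition of $\uparrow$ will yield
\[
c_n = \max(0, a_n-b_n) + \min(a_{n-1}, b_{n-1}), \qquad d_n = \max(0, b_n-a_n) + \min(a_{n-1}, b_{n-1}).
\]
In particular $c_n - d_n = a_n - b_n$ for every $n$, so $c_n \geq d_n$ iff $a_n \geq b_n$.

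The central step is to verify $\min(c_n, d_n) = \min(a_{n-1}, b_{n-1})$. I would argue by splitting on the sign of $a_n - b_n$: if $a_n \geq b_n$ the $\max$ term in $d_n$ vanishes, giving $\min(c_n,d_n) = d_n = \min(a_{n-1},b_{n-1})$, and the case $a_n < b_n$ is symmetric. This proves $C\cap D = (A\cap B)+1$. At the same time, the multiplicity of $n$ in $C\backslash D$ is $\max(0, c_n - d_n) = \max(0, a_n-b_n)$, so $C\backslash D = A\backslash B$. Plugging both into the definition of $\downarrow$ gives
\[
C\downarrow D \;=\; (C\backslash D)\uplus(C\cap D - 1) \;=\; (A\backslash B)\uplus(A\cap B) \;=\; A,
\]
and the identical argument with $C$ and $D$ swapped yields $B = D\downarrow C$.

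The only part that might look mildly delicate is the case analysis behind $C\cap D = (A\cap B)+1$, but since the two cases collapse to the same expression $\min(a_{n-1}, b_{n-1})$ there is no real obstacle; the whole proof is essentially a short, uniform multiplicity computation that exposes $\uparrow$ and $\downarrow$ as mutually inverse operations on ordered pairs of multisets.
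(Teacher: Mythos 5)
Your proof is correct and follows essentially the same route as the paper: the paper's one-line argument also rests on the two identities $(A\uparrow B)\cap(B\uparrow A)=A\cap B+1$ and $(A\uparrow B)\backslash(B\uparrow A)=A\backslash B$ (deduced there from the disjointness of $A\backslash B$ and $B\backslash A$), which are exactly what your multiplicity computation verifies. Your version just makes the bookkeeping explicit.
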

\begin{proof} It directly follows from the fact that $A\backslash B$ and $B \backslash A$ are disjoint multisets, hence $(A\uparrow B) \cap (B\uparrow A) = A\cap B+1$ and $(A\uparrow B) \backslash (B\uparrow A) = A\backslash B$.

\end{proof}

Now we describe $T'^{(i)}$ and $T'^{(i+1)}$ for $i>0$. Let $\bfr^+(T^{(i)})$ be the set of positive numbers in $\bfr(T^{(i)})$ and $\bfr^-(T^{(i)})$ be the set of positive numbers in $-\bfr(T^{(i)})$. Therefore, $\bfr(T^{(i)})=\bfr^+(T^{(i)}) \uplus - \bfr^-(T^{(i)})$ where $\uplus$ means a disjoint union.\\
Now we define multisets
\begin{align*}
\overline{\bfr^-}(T^{(i)})&= \bfr^-(T^{(i)}) \uparrow \bfr^+(T^{(i+1)})\\
\overline{\bfr^+}(T^{(i+1)})&= \bfr^+(T^{(i+1)}) \uparrow \bfr^-(T^{(i)})\\
C_i(T)&=\bfr^+(T^{(i)}) \uplus -\overline{\bfr^-}(T^{(i)})\\
D_i(T)&=\overline{\bfr^+}(T^{(i+1)}) \uplus -\bfr^-(T^{(i+1)}) 
 \end{align*}
Note that if $ \bfr^-(T^{(i)})$ and $\bfr^+(T^{(i+1)})$ are disjoint, then $C$ and $D$ are simply $T^{(i)}$ and $T^{(i+1)}$ respectively. Also note that $\overline{\bfr^-}(T^{(i)})$ and $\overline{\bfr^+}(T^{(i+1)})$ determine $\bfr^-(T^{(i)})$ and $\bfr^+(T^{(i+1)})$ by Lemma \ref{updown}.\\

Now we do the pairing procedure on two multiset $(C_i(T),D_i(T))$ as in Remark \ref{rset} when we define $e_i$ on SSYT of given shape: we pair $p$ in $C_i(T)$ and $q$ in $D_i(T)$ when $p<q$ until there is no possible pairing. Only difference between the paring procedure in Remark \ref{rset} and the procedure here is that now $(C_i(T),D_i(T))$ may contain negative numbers. After we found and delete all pairings, every element in the leftover in $C_i(T)$ will be larger than equal to every element in the leftover in $D_i(T)$. Then $(C_i(\te_i(T)),D_i(\te_i(T)))$ (resp. $(C_i(\tf_i(T)),D_i(\tf_i(T)))$) are defined from $(C_i(T),D_i(T))$ by moving the largest unpaired number in $D_i(T)$ to $C_i(T)$ (resp. by moving the smallest unpaired number in $C_i(T)$ to $D_i(T)$). Then $\bfr(T')$ for $T'=\te_i(T)$ or $\tf_i(T)$ is determined.

\begin{example}
Let $m=4,g=3$ and let $T$ be the SSOT in $\SSOT_3((2,1),4)$ in Example \ref{example1}. The row reading word of $T$ is 
$$\bfr(T)=(1 \overline 1) ( 1 1 \overline 1) ( 2 1 \overline 2) ( 2 1).$$
Then we have
$$r(\tf_0(T))=(1{1\overline{1}}\overline{1})(11\overline{1})(21\overline{2})(21).$$
Since $g=3$, we have $\tf_0^3(T) =0$.\\

For $i=2$, we have
\begin{align*}
\bfr^+(T^{(2)})&=\{1,1\}& \bfr^-(T^{(2)})&=\{1\} \\
\bfr^+(T^{(3)})&=\{2,1\}& \bfr^-(T^{(3)})&=\{2\}
\end{align*}
Since $\bfr^-(T^{(2)}) \cap \bfr^+(T^{(3)}) = \{1\}$, we have
\begin{align*}
\overline{\bfr^-}(T^{(2)})&= \{1\} \uparrow \{2,1\}=\{2\} \\
\overline{\bfr^+}(T^{(3)})&= \{2,1\}\uparrow  \{1\}=\{2,2\}\\
C_2(T)&=\{1,1, \overline{2}\}\\
D_2(T)&=\{2,2,\overline{2}\}
\end{align*}
Now we can find two pairs $(1,2)$ and $(1,2)$ in $(C_i(T),D_i(T))$ and the leftover in $C_2(T)$ and $D_2(T)$ is $\{\overline{2}\}$. Then we have
\begin{align*}
C_2(\te_2(T))&=\{1,1,\overline{2},\overline{2}\}\\
D_2(\te_2(T))&=\{2,2\}\\
C_2(\tf_2(T))&=\{1,1\}\\
D_2(\tf_2(T))&=\{2,2,\overline{2},\overline{2}\}
\end{align*}
By reversing the way to get $C_i(T),D_i(T)$ from $\bfr(T)$, we obtain $\bfr(\te_2(T)^{(2)})=11\overline{1}\overline{1}$ and $\bfr(\te_2(T)^{(3)})=11$, and $\bfr(\tf_2(T)^{(2)})=11$ and $\bfr(\tf_2(T)^{(3)})=22\overline{2}\overline{2}$.
\end{example}
\section{Bijections for main theorems}
Recall that $\hat\mu$ is the complement partition of $\mu$ in $(m)^g$. In this section, we construct bijections mentioned in Theorem \ref{main1} and Theorem \ref{cor1}, and prove them except the crystal structure part of Theorem \ref{cor1}.
\begin{thm} [Theorem \ref{main1}] \label{kingssot}
For a partition $\mu \subset (m)^g$, there is a bijection $\Psi_{m,g}$ between $K(\mu,m)$ and $SSOT_g(\hat\mu,m)$.
\end{thm}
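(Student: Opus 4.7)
The plan is to construct the map $\Psi_{m,g}$ explicitly, exhibit its inverse, and use crystal-weight matching as the guiding constraint throughout.

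First, I would build $\Psi_{m,g}$ as follows. Given $T\in K(\mu,m)$ and an index $i\in\{1,\ldots,m\}$, form the $i$-th oscillating horizontal strip $S_i$ from the row positions of the $i$'s and $\bar i$'s in $T$. Crystal-weight preservation dictates $\size(S_i)=g-\alpha_i$, where $\alpha_i$ is the number of $i$'s in $T$ minus the number of $\bar i$'s in $T$. The natural realization is: the down part of $S_i$ (the decreasing-shape half) lists the rows of $\bar i$'s in $T$ in increasing order, giving a down-size equal to the number of $\bar i$'s; the up part of $S_i$ lists the complement of the $i$-rows of $T$ inside a reference multiset of size $g$ supported on $\{1,\ldots,g\}$, giving an up-size equal to $g$ minus the number of $i$'s. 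The intermediate outside shapes $\lambda^{(i)}=\outside(S_i)$ are then forced recursively from $\lambda^{(0)}=\emptyset$, and the symplectic condition of $T$ guarantees that all row numbers appearing in $S_i$ lie in $\{1,\ldots,i\}$.

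Next, I would check validity and construct the inverse. Three things need verification: (i) each $S_i$ is a genuine OHS, which reduces to showing the up rows are weakly decreasing, the down rows weakly increasing, and each intermediate $\lambda^{(i)}_p$ is a partition, all following from the column-strictness of $T$ and the horizontal-strip structure of the $i$'s and $\bar i$'s; (ii) every intermediate shape has at most $g$ columns, forced by the size of the reference multiset; and (iii) $\outside(S_m)=\hat\mu$, from the telescoping identity $|\hat\mu|=mg-|\mu|$ combined with a row-by-row cell count. The inverse map sends an SSOT back to a King tableau by reading off $\bar i$-rows from the down part and $i$-rows from the complement of the up part in the reference multiset, then placing the two horizontal strips on the evolving King tableau; column-strictness follows from the OHS monotonicity, and the symplectic condition follows from the bound that $S_i$ only uses rows $\leq i$.

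The hardest step will be identifying the correct reference multiset and complement rule for the up part so that (i)--(iii) hold simultaneously and invertibly. This is where the decreasing-then-increasing OHS rule and the King tableau axioms must be meshed carefully, and where the non-trivial combinatorial content of the bijection lives. Once the reference multiset is pinned down, the remaining checks---well-definedness of $\Psi_{m,g}$ and $\Psi_{m,g}^{-1}$, and crystal-weight preservation via $\size(S_i)=g-\alpha_i$---reduce to routine bookkeeping.
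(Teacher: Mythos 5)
The outer skeleton of your argument matches the paper's proof: process the letters $i,\overline{i}$ one value at a time, let crystal-weight preservation force $\size(S_i)=g-\alpha_i$ (this identity is correct and is exactly how the paper's weights match up), verify the oscillating-horizontal-strip conditions from column-strictness, get non-negativity from the symplectic condition, and invert by reversing the two strips. But the actual content of the bijection is missing, and the one piece you do pin down concretely is wrong. The paper's rule is a \emph{column} complementation, not a row rule: the up strip $S^{(i)}_*/S^{(i)}_0$ \emph{omits} a box in column $j$ exactly when the letter $i$ occupies column $g-j$ of $T$, and the down strip $S^{(i)}_*/S^{(i+1)}_0$ has a box in column $j$ exactly when $\overline{i}$ occupies column $g-j$ of $T$. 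The punchline is that $S^{(i+1)}_0$ and $S^{(i)}_*$ are the rectangular complements of $\shape(T^{(i)})$ and $\shape(T^{(i,1)})$ inside $(i)^g$, which is precisely where the symplectic condition enters (it makes the complementary column lengths non-negative) and which makes well-definedness, the bound $c(S)\leq g$, the identity $\outside(S)=\hat\mu$, and invertibility all immediate.

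Your proposed down rule, that the down part of $S_i$ lists the rows of the $\overline{i}$'s in $T$, already fails on Example \ref{example3}: there $\overline{3}$ sits in row $3$ of the King tableau, while the box removed in the third strip sits in row $1$ of the partition $(2)$, which has no third row at all, so your rule does not even produce a valid strip (the same failure occurs for $\overline{4}$). The up rule is left as an unspecified ``reference multiset,'' and you yourself flag identifying it as the hardest step and the place where the combinatorial content lives. That is exactly the part of the proof that must be supplied --- the complementation idea --- and without it the checks (i)--(iii) cannot be carried out; with it, they reduce to the routine column-length count the paper performs.
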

\begin{proof} Recall that for $T \in K(\mu,m)$, $T^{(i)}$ denote the subtableau of $T$ consisting of entries less than or equal to $\overline i$. Let $T^{(i,1)}$ denote the subtableau of $T$ consisting of entries less than or equal to $i$. Note that both $T^{(i)}$ and $T^{(i,1)}$ are King tableaux. For given $T \in K(\mu,m)$, we will define a SSOT $\Psi_{m,g}(T)=S$ by characterizing $S^{(i)}_0$ and $S^{(i)}_*$. Until we prove that both $S^{(i)}_0$ and $S^{(i)}_*$ are partitions, we consider them as weak compositions determined by the length of each column. They are inductively defined by the following rule:\\

\begin{enumerate}
\item The horizontal strip $S^{(i)}_*/S^{(i)}$ \emph{does not} contain a box at the $j$-th column if and only if there exists $i$ in $T$ at the $(g-j)$-th column.
\item The horizontal strip $S^{(i)}_*/S^{(i+1)}$ contain a box at the $j$-th column if and only if there exists $\overline i$ in $T$ at the $(g-j)$-th column.
\end{enumerate}
By the construction, we automatically have $\Psi_{i,g}(T^{(i)})= S^{(i)}$. We first show that each $S^{(i)}$ is a partition. It is obvious that length of the $j$-th column of $T^{(i)}$ is the number of $p$ and $\overline{p}$ satisfying $p\leq i$ in the $j$-th column of $T^{(i)}$. On the other hand, the length of the $j$-th column of $S^{(i)}$ is
$$\sum_{p=1}^i \left(1-\chi(T^{(i)},p,g-j)-\chi(T^{( i )},\overline p,g-j) \right)$$
where $\chi(T,x,j)$ is $1$ if $x$ is in the $j$-th column of $T$, and $0$ otherwise. The sum is simply $i$ minus the length of the $(g-j)$-th column of $T^{(i)}$. Therefore, we just proved that $S^{(i)}$ is the rectangular complement of $\shape(T^{(i)})$ in $(i)^g$, so that $S^{(i)}$ is a partition. Indeed, note that we have used the symplectic condition of $T^{(i)}$ so that the $j$-th column length of $S^{(i)}$ are non-negative. The fact that entries of $T^{(i)}$ are weakly increasing along rows is used to confirm that $S^{(i)}$ is a partition for all $i$. We also proved that $\outside(S)=\hat \mu$. Similarly, one can prove that $S^{(i)}_*$ is a partition.

Now it is enough to show that both $S^{(i)}_*/S^{(i)}$ and $S^{(i)}_*/S^{(i+1)}$ are horizontal strips for all $i$ if and only if $T$ has entries strictly increasing along columns. Since there exist at most one $i$ and one $\overline i$ on each $j$-th column of $T$, the proof is straight-forward. 
\end{proof}
Note that Theorem \ref{kingssot} provides a bijection between $(1)$ and $(2)$ in Theorem \ref{cor1}. Therefore, to prove the bijection part of Theorem \ref{cor1} it is sufficient to provide a bijection between $SSOT_g(\emptyset,m)$ and $\M^g_m$.
\subsection{A bijection between $SSOT_g(\emptyset,m)$ and $\M^g_m$.}
Before we present the bijection, it is worth considering the case where the weight of SSOT $T$ is $(1,1,\ldots,1)$. Such a $T$ is called an oscillating tableau in \cite{CDDSY}, and in \cite{CDDSY} the bijection between the set of oscillating tableaux of size $m=2n$ and the set of complete matchings on $[2n]$, or equivalently the set of involutions in the symmetric group $S_{2n}$ with no fixed point, is given. Note that those involutions can be thought as permutation matrices in $\M_m$ with zeros on the diagonal. Moreover, the permutation matrix $M$ corresponding to  $T$ satisfies $c(M)=2c(T)$. Indeed, in \cite[Theorem 6]{CDDSY}, it is shown that $c(T)$ is the same as the crossing number of the corresponding complete matching, and in terms of involution $w$ in $S_{2n}$ it is the same as the half of the maximum number $\ell$ such that $i_1<i_2< \cdots<i_\ell$ and $w(i_1)>w(i_2)>\cdots > w(i_\ell)$. The number $\ell$ is the same as $c(M)$ by Schensted's Theorem. \\

Let $\phi$ be the bijection between the subset of $SSOT_g(\emptyset,m)$ consisting of all $T$ with weight $(1,1,\ldots,1)$ and the set of permutation matrix $M$ with zeros on the diagonal and $c(M)\leq 2g$. We construct a bijection $\Phi$ from $SSOT_g(\emptyset,m)$ to $\M_m^g$ by \emph{semistandardizing} the bijection $\phi$. The result $\Phi(T)$ when $T$ has weight $(1,1,\ldots,1)$ will be the same as $\phi(T)$, hence we will describe $\Phi$ and $\phi$ together.\\

Let $T$ be an element in $SSOT_g(\emptyset,m)$. Let $T'$, the \emph{standardization} of $T$, be the standard oscillating tableau obtained by concatenating components $T^{(i)}$ of $T$. It is clear from the definition that $c(T)=c(T')$. Let $T'=(\lambda_0,\lambda_1,\ldots,\lambda_{m'})$. Then by definition $\lambda_0$ and $\lambda_{m'}$ are the empty partition.\\

 To describe $\phi(T')$, we need to inductively define a pair $(U_i,V_i)$ for $i=0,\ldots, m'$ where $U_i$ is the set of ordered pairs of integers in $[m']$ and $V_i$ is a standard Young tableau of shape $\lambda_i$ Let $U_0$ be the empty set, and let $V_0$ be the empty standard Young tableau.

\begin{enumerate}
\item If $\lambda_i\supset \lambda_{i-1}$, then let $U_i=U_{i-1}$ and $V_i$ is obtained from $V_{i-1}$ by adding the entry $i$ in the square $\lambda_i/\lambda_{i-1}$.
\item If $\lambda_i\subset\lambda_{i-1}$, then $V_i$ is the unique standard Young tableau of shape $\lambda_i$ such that $V_{i-1}$ is obtained from $V_i$ by row-inserting some number $j$. Note that $j$ must be less than $i$. The set $U_i$ is obtained from $U_{i-1}$ by adding the ordered pair $(j,i)$. 
\end{enumerate}
It is clear from the definition that $U_0\subset U_1 \subset \cdots \subset U_{m'}$, and every integer $i$ in $[m']$ appears in $U_{m'}$ exactly once. Therefore $U_{m'}$ represents a complete matching on $[m']$. Then $\phi(T')$ is defined by the permutation matrix corresponding to the complete matching represented by $U_{m'}$. \\

To define $\Phi(T)$ from $U_{m'}$ where weight of $T$ is $\alpha$, we consider the two-line array notation of the involution $w$ in $S_m$ defined by $U_{m'}$. Then we replace $1,2,\ldots,\alpha_1$ by 1, and $\alpha_1+1,\alpha_1+2,\ldots, \alpha_1+\alpha_2$ by $2$, and so on. Then the array obtained from this procedure, which we call \emph{semistandardization}, represents a matrix $M=(m_{i,j})$ in $\M_m^g$, where $m_{i,j}$ is the number of columns of the array equal to $\left( \begin{matrix}i\\j \end{matrix}\right)$. Note that semistandardization of the $P$-tableau of $w$ is the same as a $P$-tableau of $M$, hence we give the same name for the above-mentioned procedure. Also note that a $P$-tableau corresponding to $w$ (resp. $M$) are the same as a $Q$-tableau corresponding to $w$ (resp. $M$) so that semistandardization of $w$ corresponds to semistadardizing both $P$ and $Q$-tableaux of a given weight $\alpha$.
\begin{example}\label{example3}
Consider $T=(T^{(1)},T^{(2)},T^{(3)},T^{(4)})$ given by
\begin{align*}
T^{(1)}&= \left(\emptyset, \ydiagram{1},\ydiagram{2} \right)\\
T^{(2)}&= \left(\ydiagram{2}, \ydiagram{2,1},\ydiagram{2} \right)\\
T^{(3)}&= \left(\ydiagram{2}, \ydiagram{1} \right)\\
T^{(4)}&= \left(\ydiagram{1},\emptyset \right)\\
\end{align*}
where $m=4$ and $g=2$.
Then standardization $T'$ of $T$ is simply
$$T'=\left(\emptyset, \ydiagram{1},\ydiagram{2}, \ydiagram{2,1},\ydiagram{2}, \ydiagram{1},\emptyset \right)$$

Corresponding $(U_i,V_i)$'s are
\begin{center}
\begin{tabular}{l|lllllll}

$ V_i$& $ \emptyset$& $\begin{ytableau} 1\end{ytableau}$   &  $\begin{ytableau} 1&2\end{ytableau}$   &  $\begin{ytableau} 1&2\\ 3\end{ytableau}$   & $\begin{ytableau} 1&3\end{ytableau}$  &  $\begin{ytableau} 1\end{ytableau}$ & $\emptyset$  \\ 
 $U_i$ & &  &  &  &$ (2,4)$&$(3,5)$&$(1,6)$ 
\end{tabular}

\end{center}

Note that $U_m$ corresponds to an involution $w=645231$.\\

Since the two-line array corresponding to $w$ is
$$\left(\begin{matrix} 1&2&3&4&5&6 \\ 6&4&5&2&3&1 \end{matrix} \right),$$
and the weight $\alpha$ of $T$ is $(2,2,1,1)$, the two-line array of $\Phi(T)=M$ is
$$ \left(\begin{matrix} 1&1&2&2&3&4 \\ 4&2&3&1&2&1 \end{matrix} \right)$$
and the matrix $M$ is
$$\begin{bmatrix} 0&1&0&1 \\ 1&0&1&0 \\ 0&1&0&0 \\ 1&0&0&0 \end{bmatrix}$$

One can check that the P-tableau of $M$ and $w$ are
$$P(M)=\begin{ytableau} 1&1&2&4\\ 2&3 \end{ytableau} \quad \textrm{and}\quad P(w)=\begin{ytableau} 1&2&4&6\\ 3&5 \end{ytableau}$$
so that $P(M)$ is a semistandardization of $P(w)$. Also, by considering $M$ as an element in $\M^g_m\otimes T_{g\Lambda_1}$, the crystal weight of $M$ is 
$$(-2\epsilon_{\overline{1}}-2\epsilon_{\overline{2}}-\epsilon_{\overline{3}}-\epsilon_{\overline{4}})+(2\epsilon_{\overline{1}}+2\epsilon_{\overline{2}}+2\epsilon_{\overline{3}}+2\epsilon_{\overline{4}})=\epsilon_{\overline{3}}+\epsilon_{\overline{4}}$$
which is the same as the weight of King tableau $\Psi_{m,g}^{-1}(T)=\begin{ytableau} 2 & \overline{2} \\ 3 & 3 \\ \overline{3} & 4 \\ 4 & \overline{4}\end{ytableau}$ corresponding to $T$.
\end{example}

To show that $\Phi$ is a bijection, it suffices to define an inverse $\Phi^{-1}$. Given $M=(m_{ij}) \in \M_m^g$ with a row sum $\alpha$ satisfying $|\alpha|=m'$, consider a two-line array $w_M$. Then we standardize the array in a usual way to obtain a two-line array of the permutation $w$ in $S_{m'}$: the first line of the new array is $1,2,\ldots, m'$, and for the second line of the new array, we obtained from $w_M$ by replacing $\alpha_1$ number of $1$ by $\alpha_1,\alpha_1-1, \ldots,2,1$ in a decreasing way, and replacing $\alpha_2$ numbers of $2$ by $\alpha_1+\alpha_2,\alpha_1+\alpha_2-1,\ldots,\alpha_1+2,\alpha_1+1$, and so on. Since we have the condition on $w_M$ that $i_a=i_b$ and $a<b$ implies $j_a\geq j_b$ (the second condition that $w_M$ satisfies), corresponding permutation $w$ satisfies $w(1)>w(2)>\cdots>w(\alpha_1)$, $w(\alpha_1+1)>\cdots>w(\alpha_1+\alpha_2)$, and so on. Also by the construction, $w$ is an involution with no fixed point. Indeed, if $w$ has a fixed point, one can make a contradiction by showing that $m_{pp}$ is odd for some $p$. \\

Therefore by taking the inverse $\phi^{-1}$, we obtain a SOT $T'$ of weight $1^{m'}$. Then it suffices to show that we can semistandardize $T'=(\lambda_0,\lambda_1,\ldots,\lambda_{m'})$ to obtain a SSOT $T$ of weight $\alpha$. Namely, we need to show that sequences $(\lambda_0,\ldots,\lambda_{\alpha_1})$, $(\lambda_{\alpha_1+1},\ldots,\lambda_{\alpha_1+\alpha_2})$, $\cdots$ form oscillating horizontal strips. This follows from the property of $\phi^{-1}$ so we now describe how $V_i$ can be reculsively constructed from the permutation $w$. Note that $\lambda_i$ is the shape of $V_i$. \\

We set $V_{m'}$ equal to the empty partition. For $i=0,\ldots,m'-1$, $V_i$ are defined from $w$ as follows:
\begin{enumerate}
\item if $i>w(i)$, then the $V_{i}$ is obtained by row-inserting $w(i)$ into $V_{i+1}$.
\item if $i<w(i)$, then the $V_{i}$ is obtained from $V_{i+1}$ by deleting the entry $i$. Note that $i$ is the biggest entry of $V_{i+1}$. 
\end{enumerate}

Let $\ell$ be the length of $\alpha$ and $\beta_i$ be $\sum_{j=1}^i \alpha_j$. We need to show that for given $i=0,\ldots,\ell-1$, the sequence $(\lambda_{\beta_j +1}, \lambda_{\beta_j +2},\ldots, \lambda_{\beta_{j+1}})$ forms a oscillating horizontal strip. We prove this for $i=0$ for the sake of simplicity since the proof works the same. By the construction of $w$, we have $w(1)>w(2)>\cdots>w(\alpha_1)$. There is a unique $i'\leq \alpha_1$ such that $ i'<w(i')$ but $i'+1>w(i'+1)$. Consider a tableau $V_{\alpha_1}$ and inductively defined $V_i$ for $i<\alpha_1$. Then $V_{i'+1}$ is obtained from $V_{\alpha_1}$ by row-inserting $w(\alpha_1)<w(\alpha_1-1)<\cdots<w(i'+1)$ in a increasing way and by the property of row-insertion, $r(\lambda_j/\lambda_{j+1})$ are negative and decreasing for $j=i'+1,\ldots,\alpha_1$. Also note that $V_0$ is obtained from $V_{i'+1}$ by deleting $i',i'-1, \ldots,2,1$ in a decreasing way, so we need to show that $1,2,\ldots,i'-1$ are not descents of $V_{i'+1}$. Note that the way to define $V_j$ does not change the descent set: if $V_j$ and $V_{j+1}$ contain both $p$ and $p+1$ for some $p$, then $p$ is a descent of $V_j$ if and only if $p$ is a descent of $V_{j+1}$. Since $w(p)>w(p+1)$ for $p=1,\ldots,\alpha_1-1$, the number $p$ is inserted to some $V_j$ before $p+1$ is inserted, so $p$ is not a descent of any $V$ if $p<\alpha_1$. Therefore, we have proved that $(\lambda_0,\cdots,\lambda_{\alpha_1})$ is indeed a oscillating horizontal strip. 

\section{Crystal structure on SSOT}
Let $T$ be a SSOT $(T^{(1)},T^{(2)},\ldots,T^{(m)})$. We define $\te_i(T)$ for $i\in I$ simply by $\Phi^{-1}(\te_i(\Phi(T)))$ and similarly for $\tf_i$. In this section, we prove that the operator $\te_i$ acts locally:

\begin{proposition}\label{loc} For $j\neq i,i+1$,  $T^{(j)}$ does not change when acting $\te_i$ or $\tf_i$ on $T$.  
\end{proposition}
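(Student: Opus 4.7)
My plan is to reduce the locality of $\te_i$ and $\tf_i$ on $T$ to the (easy) locality of the crystal operators on $M=\Phi(T)\in\M^g_m$, by carefully tracking the changes through the bijection $\Phi$. The cases $i=0$ and $i\geq 1$ are handled separately.

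For $i=0$ the argument is almost immediate: $\te_0$ and $\tf_0$ alter only $m_{11}$ of $M$, hence only pairs $\binom{1}{1}$ in $w_M$, and after standardization these pairs lie in the initial block $\{1,\dots,\alpha_1\}$. Running $\phi^{-1}$ then affects only the first $\alpha_1$ (or $\alpha_1\pm 2$) shapes of the associated oscillating tableau, which make up exactly $T^{(1)}$; all $T^{(j)}$ for $j\geq 2$ are preserved.

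For $i\geq 1$ I would first use the decomposition $\te_i=e_i^P\circ e_i^Q$ from Section~3.3 together with the symmetry of $M$ to identify the (at most four) altered entries of $M$, namely $m_{i,b_1}$, $m_{i+1,b_1}$, $m_{b_1,i}$, $m_{b_1,i+1}$; in particular the row and column sums outside $\{i,i+1\}$ are preserved, so the block boundaries $\beta_j$ of the standardization agree for $j\neq i$. Let $w,w'\in S_{m'}$ denote the standardized involutions of $M$ and $\te_i(M)$. The next step is to establish that $w(k)=w'(k)$ whenever $k\leq\beta_{i-1}$ or $k\geq\beta_{i+1}+1$; this follows from comparing the standardization formulas on the unchanged pairs, using the conservation $m_{i,b_1}+m_{i+1,b_1}=m'_{i,b_1}+m'_{i+1,b_1}$ to ensure that the cumulative counts governing the standardized second row coincide outside the middle block $\{\beta_{i-1}+1,\dots,\beta_{i+1}\}$. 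Running the backward recursion of $\phi^{-1}$, the agreement of $w,w'$ on $\{\beta_{i+1}+1,\dots,m'\}$ makes the tableaux $V_k$ for $k\geq\beta_{i+1}$ literally coincide, which already gives $T^{(j)}$ unchanged for $j\geq i+2$.

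The decisive step, and the main obstacle, is to show that the exit tableau $V_{\beta_{i-1}}$ is the same for both $w$ and $w'$; granted this, the agreement of $w,w'$ on $\{1,\dots,\beta_{i-1}\}$ propagates forward to give $T^{(j)}$ unchanged for $j\leq i-1$. The intermediate $V_k$'s within the middle block may genuinely differ, so invariance of $V_{\beta_{i-1}}$ must be extracted indirectly. My approach is to identify the restriction of the backward recursion to this block with a specific sequence of row-insertions and deletions depending only on the positions of $i$ and $i+1$ in the second row of $w_M$, and then to exploit the fact that $\te_i$ acts on the symmetric $P$-tableau $P(M)=Q(M)$ by the type A operator $e_i$, which alters only entries equal to $i$ or $i+1$. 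A careful case analysis of row-insertion in the standardized setting should then force the exit tableau to be invariant, completing the proof.
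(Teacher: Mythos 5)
Your overall architecture matches the paper's: push the question to the matrix side via $\Phi$, use $\te_i=e_i^P\circ e_i^Q$ to locate the (at most) four changed entries of $M$, and track the recursion defining the shapes $\shape(V_q)$ block by block. But two of your intermediate claims are false, and the step you yourself flag as decisive is not carried out. First, the standardized involutions $w$ and $w'$ do \emph{not} agree outside the middle block $\{\beta_{i-1}+1,\dots,\beta_{i+1}\}$: the moved entry sits in row and column $b_1$, so positions in block $b_1$ change partners, and since $\beta_i$ shifts by $1$, every occurrence of the value $i$ or $i+1$ in the second line of $w_M$ --- wherever it sits --- can receive a different standardized label. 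Consequently the tableaux $V_k$ for $k\ge\beta_{i+1}$ do \emph{not} literally coincide when $b_1>i+1$: they differ by one entry $i\leftrightarrow i+1$. Only their \emph{shapes} coincide, which is all that Lemma \ref{tiv} requires, but that is exactly what must be proved rather than asserted.

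The missing idea is the paper's Lemma \ref{vp}: $V_q(M)$ is obtained from $P_q(M)$ by deleting the $m'-q$ largest entries, because deleting the largest entry commutes with row-inserting a smaller one. With this one works directly with the unstandardized two-line arrays (sidestepping the relabeling problem above), observes that $P_q(M)$ and $P_q(\te_i M)$ either coincide (dual Knuth equivalence of the truncated inverse column words) or differ by a single substitution $i+1\to i$, and concludes that after deleting the largest entries the shapes agree in every regime of $q$; this handles the range $q\ge\beta_{i+1}$ and your ``decisive step'' for $q\le\beta_{i-1}$ uniformly across the three cases $b_1<i$, $b_1>i+1$, $b_1\in\{i,i+1\}$. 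As written, your proposal replaces this mechanism with ``a careful case analysis of row-insertion should force the exit tableau to be invariant,'' which is precisely the content of the proposition. Note also that invariance of the single tableau $V_{\beta_{i-1}}$ would not suffice in the case $b_1<i$: there the recursion data for $q\le\beta_{i-1}$ (row $b_1$ of $M$) also changes, and one must additionally check that the corresponding steps of the backward recursion (deletion of the rightmost $b_1$) are insensitive to whether the deleted cell was created by a $b_1$ paired with $i$ or with $i+1$.
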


After proving this, we show that $\te_i$ defined here is the same as the one described in Section 3.4.\\

We set up notations first before providing a proof of Proposition \ref{loc}. For a matrix $M=(m_{ij})$ with $\Phi(M)=T$, we define an inverse column word $\ri(M)$ by $j_{m'}j_{m'-1} \ldots j_2j_1$ where
$$w_M=\left( \begin{matrix} i_1& i_2 & \cdots & i_{m'} \\ j_1 & j_2& \cdots & j_{m'} \end{matrix}\right).$$

Also, for a matrix $M$ and an interval $[p,q]\subset [1,m']$, we define a matrix $M^{[p,q]}$ by
$$w_{M^{[p,q]}}=\left( \begin{matrix} i_p& i_{p+1} & \cdots & i_{q} \\ j_p & j_{p+1}& \cdots & j_{q} \end{matrix}\right).$$
For a matrix $M$, we define pairs of semistandard tableaux $(P_q(M),V_q(M))$ in a following way. We set both $P_{m'}(M)$ and $V_{m'}(M)$ to be the empty semistandard Young tableau. For $0\leq q < m'$, $P_q(M)$ is obtained by row-inserting $j_{q+1}$ into $P_{q+1}(M)$. In other words, $P_q(M)=  ((j_{m'} \leftarrow j_{m'-1}) \cdots \leftarrow j_{q+1})$ where $S\leftarrow a$ denote the tableau obtained by row-inserting $a$ into $S$. Note that $P_0(M)$ is equal to $P(M):=(j_{m'}\rightarrow \cdots (j_2\rightarrow j_1))$ because the row-insertion commutes with the column insertion.\\

We define $V_q(M)$ as the semistandardization of $V_i$ defined in Section 4. Explicitely, $V_q(M)$'s are defined inductivley by the following:
\begin{enumerate}
\item If $i_q>j_q$, then $V_q(M)$ is obtained by row-inserting $j_{q+1}$ into $V_{q+1}(M)$,
\item If $i_q<j_q$, then $V_q(M)$ is obtained from $V_{q+1}(M)$ by deleting the right-most $i_q$. Note that $i_q$ is the largest entry of $V_{q+1}(M)$.
\item If $i_q=j_q=r$ for some $r$, then there is an even number $2\ell$ such that $(i_q,j_q)=(i_{q-1},j_{q-1})=\cdots=(i_{q-2\ell+1},j_{q-2\ell+1})=(r,r)$. Then for $1\leq q' \leq \ell$, $V_{q-q'}(M)$ is obtained by row-inserting $r$ into $V_{q-q'+1}(M)$ and for $\ell<q' \leq 2q$, $V_{q-q'}(M)$ is obtained from $V_{q-q'+1}(M)$ by deleting the right-most $r$.
\end{enumerate}

Since $V_i(M)$ is the semistandardization of $V_i$, when $\Phi(M)=T$, we have

\begin{lemma}\label{tiv} $T^{(i)}$ is equal to
$$\left( \shape(V_{\beta_i}),\shape(V_{\beta_i+1}),\ldots, \shape(V_{\beta_{i+1}})\right)$$
where $\beta_i=\sum_{j=1}^{i}\alpha_j$.
\end{lemma}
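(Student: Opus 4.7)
The plan is to identify the shape of each $V_q(M)$ with the $q$-th partition in the standardized oscillating tableau $T' = \phi^{-1}(w)$, and then appeal to the construction of $\Phi$ as a semistandardization of $\phi^{-1}$ to split $T'$ into the oscillating horizontal strips $T^{(i)}$.

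First I would establish the shape identity $\shape(V_q(M)) = \lambda_q$, where $T' = (\lambda_0, \ldots, \lambda_{m'})$. This is essentially a definition, since $V_q(M)$ is introduced as the semistandardization of the standard tableau $V_q$ from the proof of Theorem \ref{kingssot} and semistandardization preserves shape; to make this precise one can induct on $m' - q$, starting from the empty base $V_{m'}(M) = \emptyset$, and check that the three-case recursion for $V_q(M)$ given in this section is exactly the semistandardized version of the recursion for $V_q$ in Section 4. When $i_{q+1} \neq j_{q+1}$ the translation is transparent: a row-insertion of an integer in the standard picture becomes a row-insertion of the semistandardized letter, and an inverse row-insertion at the largest entry becomes deletion of the rightmost occurrence. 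When $i_{q+1} = j_{q+1} = r$, a run of $2\ell$ identical columns in $w_M$ standardizes to a block of $\ell$ ascents followed by $\ell$ descents in the first line of the standardized array, so the corresponding standard $V$-tableaux undergo $\ell$ row-insertions of distinct numbers followed by $\ell$ deletions; one has to check that semistandardization collapses this to the $\ell$ row-insertions of $r$ followed by $\ell$ rightmost-$r$ deletions prescribed here.

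Once the shape identity is in hand the rest is bookkeeping: the definition of $\Phi$ at the end of Section 4 groups the partitions of $T'$ into consecutive blocks of sizes $\alpha_1, \alpha_2, \ldots$, and the argument given inside the construction of $\Phi^{-1}$ verifies that each such block forms an oscillating horizontal strip, which is by construction $T^{(i)}$. Combining this with $\shape(V_q(M)) = \lambda_q$ yields the claimed formula. The main technical obstacle is the identical-column case of the shape identity, where the semistandardization of a run of ascents-then-descents must be verified to collapse coherently into the prescribed sequence of insertions and deletions of a single letter $r$; outside that case the lemma is a direct restatement of the definitions of $\Phi$ and $V_q(M)$.
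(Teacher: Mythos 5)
Your proposal is correct and is essentially the paper's own argument: the paper proves this lemma in one line by declaring $V_q(M)$ to be the semistandardization of the standard tableaux $V_q$ from Section~4, so that the shapes $\shape(V_q(M))=\lambda_q$ and the grouping of the standardized sequence into blocks of sizes $\alpha_1,\alpha_2,\ldots$ (already verified in the construction of $\Phi^{-1}$) give the claim. Your additional care with the diagonal case $i_q=j_q=r$ only fills in detail the paper leaves implicit; note also that the indices in the statement should read $\beta_{i-1},\ldots,\beta_i$ rather than $\beta_i,\ldots,\beta_{i+1}$, consistent with the grouping you use.
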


There is a very useful relation between $P_q(M)$ and $V_q(M)$: 

\begin{lemma}\label{vp} $V_q(M)$ can be obtained from $P_q(M)$ by deleting $m'-q$ number of biggest entries (we delete the right-most ones first). 
\end{lemma}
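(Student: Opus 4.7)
The plan is to prove the lemma by downward induction on $q$, starting from the base case $q=m'$, for which $P_{m'}(M)=V_{m'}(M)=\emptyset$ and the claim is vacuous.

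For the inductive step, I assume $V_{q+1}(M)$ has been obtained from $P_{q+1}(M)$ by successively reverse-bumping the biggest entries (right-most first). The key observation is that in every case $P_q(M)=P_{q+1}(M)\leftarrow j_{q+1}$, while $V_q(M)$ is obtained from $V_{q+1}(M)$ by one of three prescriptions: (Case 1) the same row-insertion of $j_{q+1}$, when the current column has $i_{q+1}>j_{q+1}$; (Case 2) deletion of the right-most occurrence of $i_{q+1}$, which by the definition is the largest entry of $V_{q+1}(M)$, when $i_{q+1}<j_{q+1}$; or (Case 3) a combined insertion/deletion procedure on a maximal run of equal columns, when $i_{q+1}=j_{q+1}$.

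Case 1 is immediate from the standard RSK fact that row-insertion of an entry commutes with reverse-bumping out the largest entries: if $V_{q+1}(M)$ is built from $P_{q+1}(M)$ by a sequence of reverse-bumpings of the biggest entry, then inserting $j_{q+1}$ into both tableaux preserves this relation. For Case 2, the inserted value $j_{q+1}$ strictly exceeds the maximum $i_{q+1}$ of $V_{q+1}(M)$, so after row-inserting $j_{q+1}$ into $P_{q+1}(M)$, reverse-bumping out the biggest entries of the enlarged tableau will first undo the insertion of $j_{q+1}$ and then pop one further largest entry, which has value $i_{q+1}$; I would show this extra pop matches the deletion of the right-most $i_{q+1}$ from $V_{q+1}(M)$. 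Case 3 follows by iterating the arguments of Cases 1 and 2 across the run of length $2\ell$.

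The main obstacle is Case 2: one must verify that the extra reverse-bump after the insertion really does remove the right-most occurrence of $i_{q+1}$ that was present in $V_{q+1}(M)$. I plan to handle this by a direct comparison of the bumping route generated by inserting $j_{q+1}$ with the positions of the maximal entries in $V_{q+1}(M)$ versus $P_{q+1}(M)$, using the fact that $j_{q+1}$ is strictly larger than every entry of $V_{q+1}(M)$, which forces its insertion path to leave the right-most $i_{q+1}$ of $V_{q+1}(M)$ undisturbed, so that this $i_{q+1}$ reappears at the appropriate corner once the corner occupied by $j_{q+1}$ has been reverse-bumped away.
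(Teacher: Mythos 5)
Your proposal follows the same route as the paper: the entire published proof is the single observation that deleting the right-most occurrence of the biggest entry commutes with row-inserting a strictly smaller number, and your downward induction over the three defining cases of $V_q(M)$ is precisely the scaffolding that observation is meant to support, so in that sense you have supplied more detail than the original. One assertion in your Case 2 is wrong as stated, however: reverse-bumping the biggest entries of $P_q(M)=P_{q+1}(M)\leftarrow j_{q+1}$ does \emph{not} ``first undo the insertion of $j_{q+1}$,'' because $j_{q+1}$ is only guaranteed to exceed every entry of $V_{q+1}(M)$ (whose maximum is $i_{q+1}$), not every entry of $P_{q+1}(M)$; the latter still contains all the previously stripped entries, which can be larger than $j_{q+1}$. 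This already occurs in the paper's running example at $q=1$, where $j_{2}=2$ is inserted into a $P_2$ containing a $3$, so the first reverse-bump removes the $3$, not the newly inserted $2$. The correct bookkeeping is that, by the inductive hypothesis, every stripped entry is at least $i_{q+1}=\max V_{q+1}(M)$ and $j_{q+1}>i_{q+1}$, so one commutes the insertion of $j_{q+1}$ past the deletions of all strictly larger entries, the cell created by $j_{q+1}$ is removed somewhere in the middle of the deletion sequence, and the one additional deletion at the end removes the right-most $i_{q+1}$ of $V_{q+1}(M)$; your closing plan (tracking the bumping route against the positions of the maximal entries of $V_{q+1}(M)$) is the right way to pin down that last step, including the tie-breaking when stripped entries equal to $i_{q+1}$ are present. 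Finally, note that the count ``$m'-q$'' in the statement should really be read as $|P_q(M)|-|V_q(M)|$ (they agree only when $V_q(M)$ is empty, as one checks in the paper's own example); your induction tracks the correct number of deletions implicitly, so this does not affect your argument.
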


\begin{proof}
The lemma works because the deleting the biggest entry does commute with row-inserting a smaller number.
\end{proof}

\begin{example}
Continuing Example \ref{example3}, let $M$ be
$$\begin{bmatrix} 0&1&0&1 \\ 1&0&1&0 \\ 0&1&0&0 \\ 1&0&0&0 \end{bmatrix}.$$

Then the inverse column word $\ri(M)$ is $121324$. The pair $(P_q(M),V_q(M))$ are

\begin{center}

\begin{tabular}{c|lllllll}
$ i$ &$0$&$1$&$2$&$3$&$4$&$5$&$6$  \\[1ex]
 $ P_i$&  $\begin{ytableau} 1&1&2& 4\\2&3 \end{ytableau}$& $\begin{ytableau} 1&1&2\\2&3 \end{ytableau}$ &  $\begin{ytableau} 1&1&3\\2\end{ytableau}$   &  $\begin{ytableau} 1&1\\ 2\end{ytableau}$   &$\begin{ytableau} 1&2\end{ytableau}$   &  $\begin{ytableau} 1\end{ytableau}$   &$ \emptyset$  \\[4ex]  
$ V_i$& $ \emptyset$& $\begin{ytableau} 1\end{ytableau}$   &  $\begin{ytableau} 1&1\end{ytableau}$   &  $\begin{ytableau} 1&1\\ 2\end{ytableau}$   & $\begin{ytableau} 1&2\end{ytableau}$  &  $\begin{ytableau} 1\end{ytableau}$ & $\emptyset$  \\ 
\end{tabular}

\end{center}
Note that Lemma \ref{tiv} and Lemma \ref{vp} hold.
\end{example}

Now we are ready to prove Proposition \ref{loc}
\begin{proof}

For a matrix $M=(m_{ij})$ with $\Phi(M)=T$ and $M'=(m'_{ij})=\te_i(M)$ with $\Phi(M')=T'$, consider the two-line arrays corresponding to $M$ and $M'$:
$$w_M=\left( \begin{matrix} i_1& i_2 & \cdots & i_{m'} \\ j_1 & j_2& \cdots & j_{m'} \end{matrix}\right)\quad w_{M'}=\left( \begin{matrix} i'_1& i'_2 & \cdots & i'_{m'} \\ j'_1 & j'_2& \cdots & j'_{m'} \end{matrix}\right)$$
Since $M'= e_i^Q \circ e_i^P(M)$, there is a unique index $r$ such that the two-line array of $e_i^P(M)$ is obtained by replacing one ${i_r \choose i+1}$ by ${i_r \choose i}$, and $w_{M'}$ is obtained from this array by replacing one $i+1 \choose i_r$ by $i \choose i_r$ and rearranging so that the first line of the array is weakly increasing. 

There are three cases to deal with: (1) $i_r<i$ (2) $i_r>i+1$ (3) $i_r=i$ or $i+1$.\\

For $(1)$, note that $P_q(M)$ and $P_q(M')$ are the same for $q\geq \beta_{i+1}$ since first $m-\beta_{i+1}$ letters in inverse column words of $M$ and $M'$ are the same. Therefore corresponding $T^{(j)}$ for $j>i+1$ are the same as $T'^{(j)}$. For $j<i$, first we show that $P_q(M)$ and $P_q(M')$ are still the same if $\beta_{i-1}\geq q>\beta_{i_r}$. Since $M'=\te_i(M)$ and $\te_i(M)=e_i^Q e_i^P(M)$, when $\beta_{i-1}\geq q>\beta_{i_r}$ we have $M'^{[q,m']}=e_i^Q(M^{[q,m']})$ because $e_i^Q$ only depends on $M^{[\beta_{i-1}+1,\beta_{i+1}]}$ and $e_i^P$ changes $i_r$-th row of $M$. Therefore, first $m-q$ numbers in the inverse column words for $M$ and $M'$ are dual Knuth-equivalent for $\beta_{i-1}\geq q>\beta_{i_r}$ and therefore $P_q(M)=P_q(M')$.

 When $q\leq \beta_{i_r}$, $P_q(M)$ and $P_q(M')$ may be different since one $i+1$ is replaced by $i$ from $P_q(M)$ to $P_q(M')$, but the shape of $V_q(M)$ and $V_q(M')$ are the same since all numbers bigger than $i_r$ are deleted from $P_q(M)$ and $P_q(M')$ respectively by Lemma \ref{vp}. Therefore, we proved the proposition for case $(1)$.\\

For (2), when $q\geq \beta_{i+1}$ either $P_q(M)$ and $P_q(M')$ are the same or one $i+1$ is replaced by $i$ from $P_q(M)$ to $P_q(M')$ by $e^P_i$. In both cases, the shape of $V_q(M)$ and $V_q(M')$ are the same. When $q\leq \beta_{i-1}$, $V_q(M)$ and $V_q(M')$ are the same since both $i+1$ and $i$ are deleted so they have the same shape. Case (3) can be proved similarly hence omitted.
\end{proof}
\subsection{Explicit description of crystal operators} In this subsection, we show that $\te_i(T)$ and $\tf_i(T)$ defined by using the bijection $\Phi$ is the same as the ones defined in Section 3.4.\\

For a matrix $M$, let the second row of $w_{M^{[\beta_{i-1}+1,\beta_{i}]}}$ be $x_1x_2\ldots x_p y_1 y_2 \ldots y_q$ where $x_j\geq i$ and $y_j\leq i $ for all $j$ and the number of $i$ in $x_j$'s and $y_j$'s are the same. Let $A_i$ be the ordered set $\{x_1,\ldots,x_p\}$ and $B_i$ be the ordered set $\{y_1,\ldots,y_q\}$. Then we have
$$P_{\beta_{i-1}}(M)=(((P_{\beta_{i+1}}(M) \leftarrow B_{i+1})\leftarrow A_{i+1}) \leftarrow B_i )\leftarrow A_i$$
where $P\leftarrow A_i$ means $(((P \leftarrow a_p) \leftarrow \cdots ) \leftarrow a_1)$, namely we row-insert a smaller number first. For a tableau $P$, let $E_i(P)$ be the tableau obtained by deleting all entries $i$ from $P$ where $i$ is the biggest entry in $P$. Then we have
$$V_{\beta_{i-1}}(M)= E_i(E_{i+1}(V_{\beta_{i+1}}(M) \leftarrow B_{i+1})\leftarrow B_i)$$

Let $r(B_{i})$ be the ordered set of row numbers of horizontal strip obtained from $V_{\beta_i(M)}\leftarrow B_i$, written in a decreasing way. Let $r(A_i)$ be the ordered set of row numbers of $i$ in $V_{\beta_i(M)}\leftarrow B_i$, namely row numbers of horizontal strip obtained from $E_i(V_{\beta_i(M)}\leftarrow B_i)$. Similarly, when $\te_i(M)=M'$, we define $A'_i,B'_i,r(A'_i),r(B'_i)$ in a similar way for $M'$. Then the goal of this subsection is to determine $r(A'_i)$, $r(B'_i)$, $r(A'_{i+1})$, $r(B'_{i+1})$ from $r(A_i),r(B_i),r(A_{i+1}),r(B_{i+1})$ as they are the only information given by the SSOT $T$. Indeed, note that $r(A_i),r(B_i),r(A_{i+1}),r(B_{i+1})$ are the same as $\bfr^+(T^{(i)}),\bfr^-(T^{(i)}),\bfr^+(T^{(i+1)}),\bfr^-(T^{(i+1)})$ defined in Section 3.4.\\

We need two more statistics from $M$. Let $\overline{r}(B_i)$ be the ordered set of row numbers of horizontal strip obtained when inserting $B_i$ to $V_{\beta_{i+1}}(M)\leftarrow B_{i+1}$. Also, let $\overline{r}(A_{i+1})$ be the ordered set of row numbers of $i+1$ in $(V_{\beta_{i+1}}(M)\leftarrow B_{i+1})\leftarrow B_i$. Then we have the following lemma.

\begin{lemma} \label{up} We have
\begin{align*}
\overline{r}(B_i)&= r(B_i) \uparrow r(A_{i+1}) \\ 
\overline{r}(A_{i+1})&=  r(A_{i+1}) \uparrow r(B_i)  
\end{align*} where $A \uparrow B := (A\backslash B) \uplus (A\cap B +1)$.

\end{lemma}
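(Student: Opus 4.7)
The plan is to run the insertions of $B_i$ into $W := V_{\beta_{i+1}}(M) \leftarrow B_{i+1}$ and into $V_{\beta_i}(M) = E_{i+1}(W)$ in parallel, and to track how the horizontal strip of $(i+1)$'s sitting at the rightmost cells of $W$'s rows evolves at each step. Since $W$ is a semistandard Young tableau with entries at most $i+1$, those $(i+1)$'s occupy the right ends of their rows and the multiset of their row indices is precisely $r(A_{i+1})$, so $E_{i+1}(W)$ is just $W$ with this strip removed.

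The main step is a one-step comparison: for any $y \leq i$, the bumping paths of $W \leftarrow y$ and of $E_{i+1}(W) \leftarrow y$ agree through row $k-1$, where $k$ is the row in which the insertion into $E_{i+1}(W)$ ends, because in every row $j < k$ the bumped entry is $\leq i$ and so cannot reach the $(i+1)$'s lying further to the right. In row $k$, the entry $z$ arriving there satisfies $z \leq i$ and all entries of $E_{i+1}(W)$ in row $k$ are $\leq z$; hence if $W$'s row $k$ contains no $(i+1)$ the insertion into $W$ also ends in row $k$ with the $(i+1)$'s untouched, whereas if it contains at least one $(i+1)$ then $z$ bumps the leftmost such $(i+1)$, which is then appended to row $k+1$ (no entry there exceeds $i+1$), so the new cell of $W$ lies in row $k+1$ and the row multiset of $(i+1)$'s loses one $k$ and gains one $k+1$. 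In both cases the resulting tableau $W'$ still has its $(i+1)$'s at the right ends of its rows and satisfies $E_{i+1}(W') = E_{i+1}(W) \leftarrow y$, so this invariant is preserved under iteration.

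Iterate over the elements of $B_i$ inserted smallest first, and let $k_q \geq k_{q-1} \geq \cdots \geq k_1$ be the rows of the new cells created in $V_{\beta_i}(M)$; the monotonicity is the standard fact for row-inserting a weakly increasing sequence, and $r(B_i) = \{k_1, \ldots, k_q\}$ as a multiset. Let $R$ be the running multiset of rows of $(i+1)$'s in $W$, initially $R = r(A_{i+1})$; the one-step comparison says that each insertion either leaves $R$ unchanged with the new $W$-cell in row $k_j$, or (when $k_j \in R$) removes one $k_j$ and adds one $k_j + 1$ to $R$ while putting the new $W$-cell in row $k_j + 1$. Because the $k_j$ are processed in weakly decreasing order of value, for each $v$ the entire block of insertions with $k_j = v$ is processed before any insertion with smaller value, and the only blocks that affect the $v$-multiplicity of $R$ are the $v$-block itself and the later $(v-1)$-block. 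Writing $a_v$ and $b_v$ for the multiplicities of $v$ in $r(A_{i+1})$ and $r(B_i)$, at the start of the $v$-block $R$ still has $a_v$ copies of $v$, so exactly $\min(a_v, b_v)$ of the $b_v$ insertions in that block match, producing $\min(a_v, b_v)$ new cells in row $v+1$ and $\max(0, b_v - a_v)$ new cells in row $v$. Summing the $v$-contributions from the $v$-block and the $(v-1)$-block gives that the multiplicity of $v$ in $\overline{r}(B_i)$ equals $\max(0, b_v - a_v) + \min(a_{v-1}, b_{v-1})$, and a parallel count gives $\max(0, a_v - b_v) + \min(a_{v-1}, b_{v-1})$ for $v$ in the final $R = \overline{r}(A_{i+1})$; these are exactly the multiplicities of $v$ in $r(B_i) \uparrow r(A_{i+1})$ and $r(A_{i+1}) \uparrow r(B_i)$. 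The main obstacle is this last bookkeeping step, which goes through cleanly only because the monotonicity of the $k_j$'s lets us process the blocks in strict value order and each insertion perturbs $R$ only at the two adjacent values $k_j$ and $k_j + 1$.
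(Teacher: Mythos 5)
Your proposal is correct and follows essentially the same route as the paper: both arguments compare the bumping paths of $W\leftarrow B_i$ and $E_{i+1}(W)\leftarrow B_i$, observe that a path is extended by exactly one row precisely when it terminates at a row of $W$ still holding an $i+1$, and then match those extended paths with the multiset $r(B_i)\cap r(A_{i+1})$. Your block-by-block bookkeeping (using that the new cells appear in weakly decreasing rows) just makes explicit the counting that the paper's proof leaves implicit.
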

\begin{proof} Let $Q$ be $P_{\beta_{i+1}}(M) \leftarrow B_{i+1}$. Then $r(B_i)$ is the row numbers of $E_{i+1}(Q) \leftarrow B_i$ and $\overline{r}(B_i)$ is the row numbers of $Q\leftarrow B_i$. Note that insertion paths of $ E_{i+1}(Q) \leftarrow B_i$ and $Q\leftarrow B_i$ are exactly same except the case when the last entry in an insertion path of $Q\leftarrow B_i$ is $i+1$. In this case the length of this insertion path in $Q\leftarrow B_i$ is one bigger than the corresponding insertion path in $E_{i+1}(Q) \leftarrow B_i$. However row positions of $i+1$ in $Q$ is given by $r(A_{i+1})$, hence we have a bijection between such insertion paths in $E_{i+1}(Q) \leftarrow B_i$ and the multiset $r(B_i) \cap r(A_{i+1})$, by sending an insertion path to the row number of the last entry of the insertion path. Then the first equation of the lemma follows. \\

For the second equation, note that $\overline{r}(A_{i+1})$ is the row numbers of $i+1$ in $Q\leftarrow B_i$ and $r(A_{i+1})$ is the row numbers of $i+1$ in $Q$. When we insert $B_i$ into $Q$, entries $i+1$ in $Q$ are either staying at the same row or moving down one row. We already showed that $i+1$ in $Q$ is moving down one row if $i+1$ is in the insertion path of $Q\leftarrow B_i$ and in this case row positions of such $i+1$'s is the multiset $r(B_i) \cap r(A_{i+1})+1$. Therefore the second equation follows.

\end{proof}

Recall that the operation $\uparrow$ is reversible because of Lemma \ref{updown}.

\begin{cor} \label{down} We have
\begin{align*}
r(B_i)&=\overline{r}(B_i)\downarrow \overline{r}(A_{i+1})\\
r(A_{i+1})&=\overline{r}(A_{i+1})\downarrow \overline{r}(B_i)
\end{align*}
where $A\downarrow B := (A\backslash B) \uplus (A \cap B -1)$.
\end{cor}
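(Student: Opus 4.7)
The plan is to apply Lemma \ref{updown} directly to the two identities established in Lemma \ref{up}. More precisely, I would set $A := r(B_i)$, $B := r(A_{i+1})$, $C := \overline{r}(B_i)$, and $D := \overline{r}(A_{i+1})$. Lemma \ref{up} states exactly that $C = A \uparrow B$ and $D = B \uparrow A$, so the hypotheses of Lemma \ref{updown} are met, and its conclusion reads $A = C \downarrow D$ and $B = D \downarrow C$. Rewriting this back in terms of the original notation gives the two displayed equations of the corollary.

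There is essentially no substantive obstacle: the real content is absorbed into Lemma \ref{updown}, which derived the reversibility of $\uparrow$ from the disjointness of $A \backslash B$ and $B \backslash A$, and into Lemma \ref{up}, which carried out the row-insertion analysis. I therefore envision the proof as a single line that names the substitution and invokes both lemmas in sequence, with no calculation required.

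It is worth emphasizing in the write-up why this corollary matters conceptually, even though its proof is immediate. The quantities $\overline{r}(B_i)$ and $\overline{r}(A_{i+1})$ are what the SSOT data $T^{(i)}, T^{(i+1)}$ directly encode (as $\bfr^-(T^{(i)})$ and $\bfr^+(T^{(i+1)})$), while the unbarred $r(B_i)$ and $r(A_{i+1})$ are the matrix-level ingredients needed to describe the action of $\te_i, \tf_i$. The corollary guarantees that these matrix-level ingredients are recoverable from the SSOT-level data via the explicit $\downarrow$ operation, which in turn justifies the purely SSOT-intrinsic description of the crystal operators given in Section 3.4.
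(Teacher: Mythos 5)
Your proposal is correct and matches the paper exactly: the paper derives Corollary \ref{down} in one line by noting that $\uparrow$ is reversible via Lemma \ref{updown} applied to the two identities of Lemma \ref{up}, which is precisely the substitution you describe.
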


Now we are ready to describe $\te_i(T)$ in terms of $r(A_i),r(B_i),r(A_{i+1}),r(B_{i+1})$. We first need to know whether $i_r$ is less than equal to $i$, or greater than $i$. Recall that to know $i_r$, it is enough to compute $e_i^Q(M)$ by pairing numbers at $i$-th row of $M$, which is $A_i\cup B_i$, and numbers at $i+1$-th rows of $M$ which is $A_{i+1} \cup B_{i+1}$. Let $a_i$, $a_{i+1}$ be the number of unpaired elements in $(A_i,A_{i+1})$ after pairing all $p<q$ pairs in $(A_i,A_{i+1})$, and let $b_i$, $b_{i+1}$ be the number of unpaired elements in $(B_i,B_{i+1})$. Then $i_r \leq i$ if and only if $b_i\geq a_{i+1}$. Note that this condition is essentially the same as the condition for tensor product of crystals in Section 3.2.\\

Now $a_i,a_{i+1},b_i,b_{i+1}$ can be computed from $r(A_i),\overline{r}(B_i),\overline{r}(A_{i+1}),r(B_{i+1})$. The number of all parings $(p,q)$ (with $p<q$) in the set $(A_i,A_{i+1})$ is the same as the number of pairings in the set $(r(A_i),\overline{r}(A_{i+1}))$ since $r(A_i)$ is the ordered set of row positions of $i$ in $(V_{\beta_{i+1}}(M)\leftarrow B_{i+1})\leftarrow B_i$ and $\overline{r}(A_{i+1})$ is the ordered set of row positions of $i+1$ in the same tableau. This can be proved by an induction on $q$ by keeping track of positions of $i$ and $i+1$ in $V_q(M)$. Indeed, note that inserting a number $x$ into $V_q(M)$ with $x\neq i,i+1$, or deleting a bigger number than $i+1$ does not affect the number of parings $(i,i+1)$, so one can assume that $r$-th row of $M$ for $r>i+1$ is zero except $i$-th column and $(i+1)$-th column, then the claim follows since $i$-th and $(i+1)$-th columns are determined by $A_i$ and $A_{i+1}$. Therefore, $a_i$ (resp. $a_{i+1}$) is the cardinality of $A_i$ (resp. $A_{i+1}$) minus the number of such pairings.\\

Similary, the number of pairings $(p,q)$ with $p<q$ in the set $(B_i,B_{i+1})$ is the same as the number of pairings $(p',q')$ with $p'<q'$ in $(r(B_{i+1}),\overline{r}(B_i))$ since $r(B_{i+1})$ is the row numbers of horizontal strip when inserting $B_{i+1}$ into $V_{\beta_{i+1}}(M)$ and $\overline{r}(B_i)$ is the row numbers of horizontal strip when we insert $B_i$ in $V_{\beta_{i+1}}(M)\leftarrow B_{i+1}$. Note that $B_{i+1}$ is inserted before $B_i$, so we have a pairing $(p',q')$ with $p'<q'$ where $p' \in r(B_{i+1})$ and $q' \in \overline{r}(B_i)$. Therefore, the pairing $(p',q')$ becomes a paring $(-q',-p')$ with $-q'<-p'$ with $-q'$ in  $-\overline{r}(B_i)=-\overline{\bfr^-}(T^{(i)})$ and $-p'$ in $-r(B_{i+1})=\bfr^-(T^{(i+1)})$ that appears in Section 3.4.\\

Therefore, $b_{i}$ is the same as the cardinality of $B_i$ minus the number of all pairings in $(r(B_{i+1}),\overline{r}(B_i))$. Then the condition $b_i \geq a_{i+1}$ can be described in terms of $r(A_i)=\bfr^+(T^{(i)}),\overline{r}(B_i)=\overline{\bfr^-}(T^{(i)}),\overline{r}(A_{i+1})=\overline{\bfr^+}(T^{(i+1)}),r(B_{i+1})=\bfr^-(T^{(i+1)})$, and the condition is equivalent to the statement that the largest unpaired element in $D_i(T)$, after deleting all parings in $(C_i(T),D_i(T))$ is in $-\bfr^-(T^{(i+1)})$.\\

Now we describe the crystal operater $\te_i(T)$ for two cases: (1) $i_r >i$ (2) $i_r\leq i$. 

For case (1), it seems that we have to deal the case $i_r=i+1$ and the case $i_r>i+1$ seperately. If $i_r>i+1$, then we have $B'_i=B_i$ and $B'_{i+1}=B_{i+1}$, and 
$$V_{\beta_{i+1}}(M')=e_i(V_{\beta_{i+1}}(M))$$
which follows from Lemma \ref{vp} and Lemma \ref{tiv}.  
If $i_r=i+1$ we have $V_{\beta_{i+1}}(M')=V_{\beta_{i+1}}(M)$ and $B'_i=B_i$, but instead $B'_{i+1}$ has one less $i+1$ and one more $i$ compared to $B_{i+1}$.\\ 

However, we use a trick to consider two cases simultaneously. Consider a matrix $N=(n_{pq})$ such that $n_{pq}=0$ if $p<i$ or both $p>i+1$ and $q>i+1$, $n_{pq}=m_{pq}$ if $p>i+1$ and $q\leq i+1$. When $p=i$ or $i+1$, $n_{pq}$ is zero if $p<q$, $m_{pq}$ if $p>q$, and $m_{pq}/2$ if $p=q$. For example, if $i=2$ and $M$ is
$$\begin{bmatrix} 2&2&1&1 \\ 2&2&0&1 \\1&0&4&1 \\ 1&1&1&2\end{bmatrix},$$
then we define $N$ by
$$\begin{bmatrix} 0&0&0&0 \\ 2&1&0&0 \\1&0&2&0 \\ 1&1&1&0 \end{bmatrix}.$$
We also define $N'$ similarly for $M'$. Then by the construction, we have
$$e_i^P(N)=N'.$$ 
Indeed, $i$-th and $(i+1)$-th columns of $N$ (resp. $N'$) can be recovered from $A_i, A_{i+1}$ (resp. $A'_i$, $A'_{i+1}$). In the above example, we have
$$M'=\begin{bmatrix} 2&2&1&1 \\ 2&2&0&2 \\1&0&4&0 \\ 1&2&0&2 \end{bmatrix}\quad N'=\begin{bmatrix} 0&0&0&0 \\ 2&1&0&0 \\1&0&2&0 \\ 1&2&0&0 \end{bmatrix}$$
Note that $P$-tableau for $N$ is $V_{\beta_{i+1}}(M) \leftarrow B_{i+1} \leftarrow B_i$ and $P$-tableau for $N'$ is $V_{\beta_{i+1}}(M')\leftarrow B'_{i+1}\leftarrow B'_i$, hence we have 
$$V_{\beta_{i+1}}(M')\leftarrow B_{i+1}\leftarrow B_i=e_i(V_{\beta_{i+1}}(M) \leftarrow B'_{i+1}\leftarrow B'_i)$$
Recall that $r(A_i),\overline{r}(A_{i+1})$ are positions of $i$ and $i+1$ in $V_{\beta_{i+1}}(M) \leftarrow B_{i+1}\leftarrow B_i$, and $r(A'_i),\overline{r}(A'_{i+1})$ are positions of $i$ and $i+1$ in $V_{\beta_{i+1}}(M') \leftarrow B'_{i+1}\leftarrow B'_i$ hence the description is the same as the one described in Section 3.4. Moreover, since $Q$-tableau of $N$ and $N'$ are equal, we showed that $\overline{r}(B_i),r(B_{i+1})$ does not change under $\te_i$.\\

For case (2), first note that $V_{\beta_{i+1}}(M')=V_{\beta_{i+1}}(M)$, where $M'=\te_i(M)$. We also know that $B'_i$ has one more $i_r$ than $B_i$ and $B'_{i+1}$ has one less $i_r$ than $B_{i+1}$. We first show that $r(A_i),\overline{r}(A_{i+1})$ are unchanged. Note that they are row positions of $i$ and $i+1$ in the tableau $(V_{\beta_{i+1}}(M)\leftarrow B_{i+1})\leftarrow B_i$. For this case, we have
$$(V_{\beta_{i+1}}(M)\leftarrow B_{i+1})\leftarrow B_i = (V_{\beta_{i+1}}(M')\leftarrow B'_{i+1})\leftarrow B'_i$$
since $V_{\beta_{i+1}}(M)=V_{\beta_{i+1}}(M')$ and $P$-tableaux when inserting $B_{i+1}$ and $B_i$ does not change since $B_i,B_{i+1}$ and $B'_i,B'_{i+1}$ differ by the operator $e^Q$. Therefore, $r(A_i),\overline{r}(A_{i+1})$ are unchanged.\\

On the other hand, $\overline{r}(B_i)$ and $r(B_{i+1})$ records two horizontal strips in $Q$-tableaux when inserting $B_{i+1}$ and $B_i$. Since this $Q$-tableau is somewhat different from the definition of $Q(M)$ defined in Section 2, let us describe this $Q$-tableau precisely. We first define a $\overline{Q}$-tableau for any matrix $M$ by the recording tableau when row-inserting the inverse reading word of $M$.

\begin{example}
Let $m=4$ and let $N$ be 
$$\begin{bmatrix} 0&0&0&0 \\ 2&1&0&0 \\1&1&1&0 \\ 1&1&1&0 \end{bmatrix}.$$
Then the inverse reading word of $N$ is $(123)(123)(112)()$, where the parentheses imply the row number of each number. Then $P$-tableau $P(N)$ and $Q$-tableau $\overline{Q}(N)$ are 
$$P(N)=\begin{ytableau}1&1&1&1&2\\2&2&3\\3 \end{ytableau}, \quad \overline{Q}(N)=\begin{ytableau}1&1&1&2&3\\2&2&3\\3 \end{ytableau}.$$
Note that entries in $(m+1-i)$-th row of $N$ contributes to entries of $i$ in $\overline{Q}(N)$. 
\end{example} 
It is well-known that $P(N)$ is the same as the one defined in Section 2, but $\overline{Q}(N)$ is clearly different from $Q(N)$: entries of $i$ in $\overline{Q}(N)$ are determined when inserting numbers at $(m+1-i)$-th row of $N$. However, there is a very simple relation between these $Q$-tableaux.
\begin{lemma}\label{Qrotation} For a $m \times m$ matrix $N$, consider a matrix $\psi(N)$ obtained from $N$ by rotating $180^\circ$ degrees. Then 
$$\overline{Q}(N)=Q(\psi(N)).$$
\end{lemma}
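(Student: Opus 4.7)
The plan is to unfold both definitions to the level of two-line arrays and insertion sequences, match labels, and reduce the equality to a step-by-step matching of intermediate shapes. Since $\psi(N)_{ab}=N_{m+1-a,m+1-b}$, the sorting convention (top row ascending, ties broken by bottom row descending) implies that if $w_N=\left(\begin{matrix} i_1 & \cdots & i_{m'}\\ j_1 & \cdots & j_{m'}\end{matrix}\right)$, then
$$w_{\psi(N)}=\left(\begin{matrix} a_1 & \cdots & a_{m'}\\ b_1 & \cdots & b_{m'}\end{matrix}\right),\qquad a_k=m+1-i_{m'+1-k},\ b_k=m+1-j_{m'+1-k}.$$
Equivalently, $w_{\psi(N)}$ is obtained from $w_N$ by reversing the order of columns and applying the involution $x\mapsto m+1-x$ to every entry.

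By definition $Q(\psi(N))$ records the shapes when column-inserting $b_1,\ldots,b_{m'}$, placing label $a_k$ in the box created at step $k$. Unwinding the definition of $\overline Q(N)$, it records shapes when row-inserting $\ri(N)=j_{m'}j_{m'-1}\cdots j_1$, placing label $m+1-i_{m'+1-k}$ in the box created at step $k$ (since entries from the $(m+1-i)$-th row of $N$ carry label $i$). The $k$-th label agrees in both cases, so it suffices to check that the two sequences of shapes coincide.

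The key step is the following shape identity: for any word $w=w_1\cdots w_k$ in the alphabet $[m]$, the shape of $P_{\mathrm{row}}(w)$ equals the shape of $P_{\mathrm{col}}(w^*)$, where $w^*=(m+1-w_1)\cdots(m+1-w_k)$. Applying this to each prefix $j_{m'}\cdots j_{m'+1-k}$ of $\ri(N)$ (whose entrywise complement is $b_1\cdots b_k$) matches the shape after $k$ row-insertion steps in $\overline Q(N)$ with the shape after $k$ column-insertion steps in $Q(\psi(N))$. Since each step adds exactly one box, matching shapes force the new boxes to coincide position by position, and together with the matching of labels this yields $\overline Q(N)=Q(\psi(N))$.

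The shape identity itself is proved by combining two classical facts. First, $P_{\mathrm{col}}(v)=P_{\mathrm{row}}(v^{\mathrm{rev}})$, which reduces the claim to comparing the shapes of $P_{\mathrm{row}}(w)$ and $P_{\mathrm{row}}(w^{*,\mathrm{rev}})$. Second, by Greene's theorem the shape of a row-insertion tableau is determined, for each $r$, by the maximum total length of $r$ pairwise disjoint weakly increasing subsequences; the position-reversal map $q\mapsto k+1-q$ (together with reversing the order within each chosen subsequence) carries weakly increasing subsequences of $w^{*,\mathrm{rev}}$ bijectively to weakly increasing subsequences of $w$ of the same length, preserving disjointness. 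Hence the two Greene invariants coincide and the shapes agree. The main obstacle is this last Greene-theoretic comparison, which is the combinatorial heart of the argument; the rest of the proof is careful bookkeeping of definitions.
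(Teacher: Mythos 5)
Your proof is correct and follows essentially the same route as the paper's: both identify $w_{\psi(N)}$ with the reversed, entrywise-complemented array of $w_N$ and then invoke Greene's theorem together with the fact that complementation exchanges increasing and decreasing subsequences. You simply supply more of the bookkeeping (the prefix-by-prefix shape matching, the label matching, and the reduction $P_{\mathrm{col}}(v)=P_{\mathrm{row}}(v^{\mathrm{rev}})$) that the paper leaves implicit.
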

\begin{proof} For a sequence of integers $x_1\cdots x_\ell$, define $\psi(x_1\cdots x_\ell)$ by $(m+1-x_1)\cdots (m+1-x_\ell)$. Then it is clear that when we apply $\psi$ to the inverse reading word of $N$, we obtain the reading word of $\psi(N)$. Recall that $Q(N)$ is the recording tableau when column-inserting the word $\psi(N)$. Then one can show the lemma by using the observation that a decreasing subsequence of a word $x$ corresponds to an increasing subsequence of the word of $\psi(x)$, and Greene's theorem stating that the shape of $Q(N)$ is determined by the cardinality of the disjoint family of decreasing/increasing subsequences of the reading word (See \cite[A1.1.1]{Sta}). 
\end{proof}

Going back to our situation, we define $N,N'$ by the same $N,N'$ defined in the case $(1)$. Then by the construction, we have
$$e_i^Q(N)=N',$$

Also, $P$-tableau for $N$ is $V_{\beta_{i+1}}(M) \leftarrow B_{i+1} \leftarrow B_i$ as before, and we already showed that $P(N)=P(N')$ proving that $r(A_i),\overline{r}(A_{i+1})$ are unchanged. Note that $r(B_{i+1}), \overline{r}(B_i)$ are the row positions of $m-i$ and $m+1-i$ of $\overline{Q}(N)$. By lemma \ref{Qrotation} and the fact that $\psi(e_i^Q(N)))=f_{m-i}^Q(\psi(N))$, we have 
$$\overline{Q}(N')=\overline{Q}(e_i^Q(N))=Q(\psi(e_i^Q(N)))=Q(f_{m-i}^Q(\psi(N))=f_{m-i}(\overline{Q}(N)).$$

Therefore, $r(B'_{i+1}),\overline{r}(B'_i)$ are simply obtained from $r(B_{i+1}), \overline{r}(B_i)$ by applying $f_{m-i}$ operator. Note that the element $x$ in $r(B_{i+1})$ that are moved to $\overline{r}(B'_{i})$ is the smallest unpaired number 
in $r(B_{i+1})$, after deleting pairs $(p',q')$ with $p'<q'$ in $(r(B_{i+1}),\overline{r}(B_i))$. This $x$ corresponds to the largest unpaired number $-x$ in $-r(B_{i+1})=-{\bfr^-}(T^{(i+1)})$ so $\te_i(T)$ is the same as the one described in Section 3.4. Hence we proved that the description of $\te_i(T)$ for both cases is the same as the one given in Section 3.4. \\

Now we show Theorem \ref{main2}: the set $\SSOT_g(\hat\mu,m)$ is isomorphic to $\mathcal{B}(\mu)$ as $\spm$-crystals.\\

{\it Proof of Theorem \ref{main2}.}
For a SSOT $T \in \SSOT_g(\hat\mu,m)$, consider a fixed skew tableaux $R$ such that $\inside(R)=\hat \mu$, $\outside(R)=\emptyset$, $c(R)\leq g$ and weight $\gamma$ with $\ell(\gamma)=\ell$. Then by joining two SSOT $T$ and $R$, one can make an element in $\SSOT_g(\emptyset,m+\ell)$, which we denote by $T \cup R$. For a non-negative integer $i \leq m-1$, consider $\te_i(T\cup R)$. Since $\te_i$ acts locally and $i <m$, we have $\te_i(T\cup R)= T' \cup R$ for some $T' \in \SSOT_g(\hat\mu,m)$, and $T'$ is independent of $R$. Now we simply define $\te_i(T)$ by $T'$. The construction shows that the set $\SSOT_g(\hat\mu,m)$ can be considered as a sub-$\spm$-crystal of $\SSOT_g(\emptyset,m+\ell)$, so $SSOT_g(\hat\mu,m)$ is a direct sum of $\spm$-crystals for irreducible representations. We are left to show that the set $\SSOT_g(\hat\mu,m)$ is isomorphic to $\mathcal{B}(\mu)$. To show this, we need to compute the highest weight vector. One can show that $\SSOT_g(\hat\mu,m)$ has the unique highest weight SSOT $T$, which has the row number
$$\bfr(T)=(11\cdots 11)(22\cdots 22) \cdots.$$
whose weight is $\hat\mu=(g-\mu_m,\cdots,g-\mu_1)$, so the crystal weight is $(\mu_m,\ldots,\mu_1)=\mu_m \epsilon_{\overline{m}}+\mu_{m-1}\epsilon_{\overline{m-1}}+\cdots+\mu_1 \epsilon_{1}$ which we identified by $\mu$.\qed\\

\section{applications}\label{app}

\subsection{Double crystal structure on $\wedge ((\mathbb{C}^{2m})^g)$}
In this section, we explain the fact that a SSOT appears naturally as a $Q$-tableau in symplectic version of RSK algorithm, providing the double crystal structure on 
$\wedge ((\mathbb{C}^{2m})^{g})$. First choose a basis of $i$-th component $\mathbb{C}^{2m}$ by $\varepsilon_{i,1},\varepsilon_{i,2}.\ldots,\varepsilon_{i,m},\varepsilon_{i,\overline{m}},\ldots,\varepsilon_{i,\overline{1}}.$ Then a basis of $\wedge ((\mathbb{C}^{2m})^{g})$ can be taken by
$$\{\wedge_{i=1}^g( \varepsilon_{i,j_{i,1}}\wedge \ldots, \wedge \varepsilon_{i,j_{i,\ell_i}})\}$$
and we identify the $i$-th component by a column tableau $$\begin{ytableau}j_{i,1}\\j_{i,2}\\ \cdots \\ j_{i,\ell_i}\end{ytableau}$$
where $j_{i,1}<\cdots<j_{i,\ell_i}$ with respect to the ordering $1<2<\cdots<m<\overline{m}<\cdots<\overline{1}$. We consider this tableau as a Kashiwara-Nakajima tableau (KN tableau in short). Therefore, we can identify each element in the basis as $g$-tuble of one-column KN tableaux. Then we use Lecouvey's insertion \cite{Lec1} to provide one $P$-tableau, which is a KN-tableau,and one $Q$-tableau, which is a (transpose of) SSOT. Note that the case when $\ell_i=1$ for all $i$ is already described in Lecouvey's work \cite{Lec1} and in this case $Q$-tableau is a SSOT with a weight $(1,1,\ldots,1)$. Hence, instead of providing exact definitions of KN tableaux as well as insertions, we provide an example.

\begin{example}
Assume that $g=3$, and we have a following $3$-tuple of one-column KN tableaux
$$v_3\otimes v_2 \otimes v_1=\begin{ytableau} \overline{3} \end{ytableau} \otimes \begin{ytableau} 1\\2\\ \overline{1} \end{ytableau} \otimes \begin{ytableau} 1\\2\\3\end{ytableau}.$$
To define $P,Q$-tableaux of this element (which we will denote by $P$ and $Q$), we define the reading word of $v_3\otimes v_2 \otimes v_1$ by the concatenation of reading words of each $v_i$. In our example, the reading word is $12312\overline{1}\overline{3}$. Then we insert this word into the empty KN tableau. Inserting the reading word of $v_1$ into the empty tableau gives 
$$P_1=\begin{ytableau} 1\\2\\3\end{ytableau}, Q_1=\left(\emptyset,\ydiagram{1},\ydiagram{1,1},\ydiagram{1,1,1}\right).$$
Inserting the reading word of $v_2$ into $P_1$ provides
$$P_2=\overline{1}\rightarrow (2\rightarrow (1 \rightarrow P_1))=\overline{1}\rightarrow \begin{ytableau} 1&1\\2&2\\3\end{ytableau}=\begin{ytableau} 1&2\\2\\3\end{ytableau}$$
where $x\rightarrow C$ is the insertion of a letter $x$ into a KN tableau $C$ described in \cite{Lec1} and
$$Q_2=\left(\ydiagram{1,1,1},\ydiagram{2,1,1}, \ydiagram{2,2,1},\ydiagram{2,1,1} \right).$$
Finally, inserting $v_3$ into $P_2$ provides
$$P=P_3= \begin{ytableau} 1&2 \\ 2 \end{ytableau}$$
and 
$$Q_3=\left( \ydiagram{2,1,1}, \ydiagram{2,1} \right)$$
To obtain a SSOT $Q$ from $Q_1,Q_2,Q_3$, we simply consider the concatenation of transposes of $Q_i$, providing the row sequence of $Q$ as
$$r(Q)=(111)(12\overline{1})(\overline{1})$$
\end{example}
A proof to show that the tableau $Q$ is indeed a SSOT is very similar to the standard case, where weight is $(1,1,\ldots,1)$, whose proof is given in \cite[Proposition 5.1.3]{Lec1}, hence we omit the proof. Now it is clear that we have $\spm\oplus\mathfrak{sp}_g$-crystal structure on $\wedge ((\mathbb{C}^{2m})^g)$, where crystal operators for $\spm$ acts only on $P$, and crystal operators for $\mathfrak{sp}_g$ acts only on $Q$.

\subsection{Symplectic Schur times Schur}
For a partition $\lambda$ of length $\leq m$, let $\chi_\lambda$ be the irreducible character of $Sp(2m)$ indexed by $\lambda$, and let $s_\lambda:=s_\lambda(x_1^{\pm},\ldots,x_m^{\pm})$ be the Schur polynomial in $x_1^{\pm},\ldots,x_m^{\pm}$ indexed by $\lambda$. Note that $s_\lambda$ is the character of the induced representation $L(\lambda)\downarrow^{GL(2m)}_{Sp(2m)}$ where $L(\lambda)$ is the irreducible representation of $GL(2m)$ indexed by $\lambda$. In this section, we present the conjectured formula computing the number of $\chi_\nu$ in $\chi_\lambda s_\mu$, generalizing the dual Pieri rule(Theorem \ref{dualpieri1}).

\begin{conj} \label{chis} For partitions $\lambda,\mu,\nu$ of length $\leq m$, the number of $\chi_{\nu}$ in $\chi_{\lambda} s_{\mu}$ is the same as the number of skew SSOT $T$ such that $\inside(T)=\lambda'$, $\outside(T)=\nu'$, $\wt(T)=\mu'$, $c(T)\leq m$ and $\varepsilon_i(T)=0$ for $i=1,2,\ldots,m-1$.
\end{conj}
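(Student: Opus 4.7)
My plan is to combine the dual Jacobi--Trudi identity with the iterated dual Pieri rule (Theorem~\ref{dualpieri1}) to express $[\chi_\nu]\chi_\lambda s_\mu$ as a signed count over skew SSOT, and then construct a sign-reversing involution whose fixed-point set matches the right-hand side of the conjecture. Set $\ell = \mu_1$ and let $e_k$ denote the character of $\wedge^k \mathbb{C}^{2m}$. Starting from
\[
s_\mu \;=\; \det\!\bigl(e_{\mu'_i - i + j}\bigr)_{1 \le i,j \le \ell},
\]
expanding the determinant and applying Theorem~\ref{dualpieri1} to each product $\chi_\lambda \cdot e_{a_1} \cdots e_{a_\ell}$ gives
\[
[\chi_\nu]\,\chi_\lambda s_\mu \;=\; \sum_{\sigma \in S_\ell} \mathrm{sgn}(\sigma)\,|A_\sigma(\lambda,\nu)|,
\]
where $A_\sigma(\lambda,\nu)$ is the set of skew SSOT $T=(T^{(1)},\ldots,T^{(\ell)})$ with $\inside(T)=\lambda'$, $\outside(T)=\nu'$, $c(T)\le m$, and $\size(T^{(i)}) = \mu'_i + \sigma(i) - i$ (empty if any such size is negative).

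\textbf{Crystal interpretation of the right-hand side.} By the embedding trick from the proof of Theorem~\ref{main2} (prepend a fixed SSOT from $\emptyset$ to $\lambda'$ and append one from $\nu'$ to $\emptyset$), the set $A_{\mathrm{id}}(\lambda,\nu)$ inherits the restriction of the $\spm$-crystal of Section~3.4 to the type-$A$ Kashiwara operators $\te_1,\ldots,\te_{m-1}$. Since every element of $A_{\mathrm{id}}$ has the same $\spm$-weight $\mu'$, the SSOT with $\vare_i(T) = 0$ for all $1 \le i \le m-1$ are the type-$A$ highest-weight elements, and they count the multiplicity of the type-$A$ irreducible of highest weight $\mu'$ inside $A_{\mathrm{id}}$. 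The conjecture thus reduces to the identity
\[
\sum_{\sigma \in S_\ell} \mathrm{sgn}(\sigma)\,|A_\sigma(\lambda,\nu)| \;=\; \bigl|\{T \in A_{\mathrm{id}}(\lambda,\nu) : \vare_i(T) = 0 \text{ for } 1 \le i \le m-1\}\bigr|.
\]

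\textbf{Sign-reversing involution.} The central step is to construct an involution $\iota$ on $\bigsqcup_\sigma A_\sigma(\lambda,\nu)$ whose fixed-point set equals the right-hand set above and which sends $A_\sigma$ into $A_{\sigma \cdot (i,i+1)}$ for an adjacent transposition depending on $T$. This is the SSOT analog of the Remmel--Shimozono/Stembridge sign-reversing involution for the classical Littlewood--Richardson rule. Given $T \in A_\sigma$ not in the fixed set, the idea is to locate the smallest $i$ at which either $\sigma(i)>\sigma(i+1)$ or the type-$A$ bracketing of $(C_i(T),D_i(T))$ from Section~3.4 exhibits an unpaired letter witnessing failure of the highest-weight condition, and then swap the $i$-th and $(i+1)$-st components of $T$ in a way that simultaneously exchanges $\sigma(i)$ and $\sigma(i+1)$. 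The locality of $\te_i$ (Proposition~\ref{loc}) is essential here: it guarantees that the swap affects only $T^{(i)}$ and $T^{(i+1)}$ and preserves $\inside$, $\outside$, and $c(T)\le m$.

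\textbf{Main obstacle and special cases.} The principal difficulty is making this swap well-defined on oscillating horizontal strips: unlike rows of an ordinary SSYT, each $T^{(i)}$ both grows and shrinks, and the swap must reconcile the size shift from $\mu'_i + \sigma(i) - i$ to $\mu'_{i+1} + \sigma(i+1) - (i+1)$ with the shape and boundary constraints while remaining an involution. For $\mu=(k)$ a single row, $\mu' = (1^k)$, so each $A_\sigma$ consists of standard oscillating tableaux with sizes $1 + \sigma(i) - i$; only permutations keeping every size in $\{0,1\}$ contribute, and the resulting identity can be matched directly against the branching decomposition of $\chi_\lambda \cdot h_k$ coming from Sundaram's symplectic Pieri rule. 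For $\mu_1 \le 3$, the determinant has at most three columns, so $\sigma$ ranges over $S_{\le 3}$, and the required involution reduces to a small number of explicit swaps that can be verified by case analysis.
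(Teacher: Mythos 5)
The statement you are proving is labeled a \emph{Conjecture} in the paper: the author establishes it only for $\mu_1\le 3$ and for $\mu$ a single row or column, and leaves the general case open. Your proposal does not close that gap. The reduction via dual Jacobi--Trudi and the iterated dual Pieri rule to the signed identity $\sum_\sigma \mathrm{sgn}(\sigma)|A_\sigma| = |\{T : \vare_i(T)=0\}|$ is the same starting point as the paper's, but everything after that is a plan rather than an argument: the sign-reversing involution $\iota$ is never constructed, only described as ``the idea,'' and you yourself identify the fatal obstacle --- that a Bender--Knuth/Remmel--Shimozono style swap of adjacent components is not obviously well defined on oscillating horizontal strips, which both grow and shrink --- without resolving it. Since the fixed-point set, the sign-reversal, and the involutive property all depend on that unconstructed map, nothing is actually proved.

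Your treatment of the special cases also does not match what is needed. The paper's proof for $\mu_1=2$ is not a component swap: it is the single crystal operator $\tf_1$ used as an injection $A_{a_1+1,a_2-1}\hookrightarrow A_{a_1,a_2}$ whose complementary image is $\{\vare_1=0\}$. For $\mu_1=3$ the paper runs an induction on $\mu_1$ and builds a chain of injections and bijections out of $\te_1,\te_2$ (the maps $h$ and $h'$), whose well-definedness and invertibility rest on a delicate bookkeeping of $\vare_1,\vare_2,\varphi_1,\varphi_2$ via Stembridge's local axioms (Propositions~\ref{local}--\ref{local1}). Asserting that these cases ``reduce to a small number of explicit swaps that can be verified by case analysis'' is not a verification, and the swaps you describe are not the maps that actually work. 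One further remark: once the $\mathfrak{gl}_m$-crystal structure on skew SSOT is granted, your target identity is just $\langle \mathrm{ch}, s_{\mu'}\rangle$ extracted by dual Jacobi--Trudi, so a combinatorial involution is not even the natural tool; the genuinely hard content --- compatibility of the crystal operators and the Pieri-rule counts with the constraint $c(T)\le m$ for the non-partition compositions appearing in the determinant --- is exactly what your proposal (and, in full generality, the paper) does not supply.
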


Note that we have $\mathfrak{gl}_m$-crystal structure on the set of skew SSOT $T$ such that $\inside(T)=\lambda$, $\outside(T)=\nu$, $c(T)\leq m$ and the length of $\wt(T)$ is $(\alpha_1,\ldots,\alpha_m)$ for $0\leq \alpha_i \leq 2m$, since we provided $\spm$-crystal structure on the same set when $\lambda$ is empty and the operators $\te_i,\tf_i$ for $i>0$ are defined locally. Indeed, the same operators $\te_i,\tf_i$ for $i>0$ does satisfy the same Stembridge axioms (See \cite{Ste,BS}) which guarantees to poccess the $\mathfrak{gl}_m$-crystal structure. Therefore, one can define $\varepsilon_i(T)$ so that Conjecture \ref{chis} makes sense. \\

Also note that the case when $\mu$ is either $(k)$ or $(1^k)$ is known. When $\mu=(k)$, then $s_{(1^k)}$ is equal to $e_k:=e_k(x_1^\pm,\ldots,x_m^\pm)$ which is the character of $\wedge^k(\mathbb{C}^{2m})$ so the conjecture reduces to the dual Pieri rule. When $\mu=(1^k)$, the conjecture states that the number of $\chi_{\nu'}$ in the product $\chi_{\lambda'} h_k(x_1^{\pm},\ldots,x_m^{\pm})$ is the number of skew SSOT $T$ such that $\inside(T)=\lambda$, $\outside(T)=\nu$, $\wt(T)=(1^k,0^{m-k})$, $c(T)\leq m$, and $\varepsilon_i(T)=0$ for $i>0$. Therefore $\bfr(T)$ is of the form 
$$(i_1)(i_2)\cdots(i_k) () \cdots ()$$
where $i_1<i_2<\cdots<i_k$, and the number of such $T$'s is equal to
$$| \{ \gamma \mid \gamma \subset \lambda, \gamma \subset \nu, \lambda/\gamma, \nu/\gamma \textrm{ are vertical strips, and } |\lambda/\gamma|+|\nu/\gamma|=k\}.|$$
This is the Pieri rule for the symplectic Schur function proved by Sundaram \cite{Sun}.\\

For the rest of this subsection, we prove the conjecture when $\mu_1\leq 3$. When $\mu_1=2$, say $\mu'=(a_1,a_2)$, we know that $s_{\mu}=e_{a_1}e_{a_2} - e_{a_1+1}e_{a_2-1}$ by Jacobi-Trudi formula for Schur functions. Since we know that the number of $\chi_\nu$ in $\chi_\lambda e_{a_1}e_{a_2}$ is the same as the cardinality of the set
$$A_{a_1,a_2}:=\{ \textrm{ skew SSOT } T \mid \inside(T)=\lambda, \outside(T)=\nu, \wt=(a_1,a_2), c(T)\leq m\},$$
it is enough to provide an injection from $A_{a_1+1,a_2-1}$ to $A_{a_1,a_2}$ and the complement of the image is simply the set of $T$ such that $\varepsilon_1(T)=0$. The injection is simply the operator $\tf_1$ and we are done.\\

When $\mu_1=3$, say $\mu'=(a_1,a_2,a_3)$, we will use the same strategy: use Jacobi-trudi formula for $s_\mu$ and an induction on $\mu_1$. Now we also need local properties of $\te_i$ and $\varepsilon_i$ for $i=1,2$ derived from Stembride axioms. We list a few properties derived from Stembridge axioms below:

\begin{proposition}[Proposition 4.5, \cite{BS}] \label{local}
If $\langle \alpha_i,\alpha_j^\vee \rangle=-1$ and $\varepsilon_i(x)>0$, then exactly one of the following two possibilities is true:
\begin{enumerate}
\item $\varepsilon_j(\te_i(x))=\varepsilon_j(x), \quad \varphi_j(\te_i(x))= \varphi_j(x)-1,$
\item $\varepsilon_j(\te_i(x))=\varepsilon_j(x)+1, \quad \varphi_j(\te_i(x))= \varphi_j(x).$
\end{enumerate}
\end{proposition}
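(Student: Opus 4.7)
The plan is to derive the two cases from (i) the defining identity
$\varphi_j(y) = \langle \wt(y), \alpha_j^\vee \rangle + \vare_j(y)$ satisfied by any crystal, (ii) the weight-shift rule $\wt(\te_i(x)) = \wt(x)+\alpha_i$, and (iii) one of the Stembridge local axioms, namely that whenever $\langle \alpha_i,\alpha_j^\vee \rangle = -1$ and $\te_i(x)\neq 0$, the quantity $\vare_j(\te_i(x))-\vare_j(x)$ lies in $\{0,1\}$. Hypothesis $\vare_i(x)>0$ guarantees $\te_i(x)\neq 0$ via semiregularity, so this axiom applies.

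First I would apply the defining identity to both $x$ and $\te_i(x)$ and subtract, using $\wt(\te_i(x)) = \wt(x)+\alpha_i$ and $\langle\alpha_i,\alpha_j^\vee\rangle=-1$, to obtain
\[
\bigl(\varphi_j(\te_i(x))-\varphi_j(x)\bigr) - \bigl(\vare_j(\te_i(x))-\vare_j(x)\bigr) = -1.
\]
Call these differences $\Delta_i\varphi_j(x)$ and $\Delta_i\vare_j(x)$. By the Stembridge axiom cited above, $\Delta_i\vare_j(x)\in\{0,1\}$, hence from the displayed equation $\Delta_i\varphi_j(x)\in\{-1,0\}$. The case $\Delta_i\vare_j(x)=0$ forces $\Delta_i\varphi_j(x)=-1$, which is alternative (1); the case $\Delta_i\vare_j(x)=1$ forces $\Delta_i\varphi_j(x)=0$, which is alternative (2). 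Exactly one of these holds because $\Delta_i\vare_j(x)$ is a single integer in $\{0,1\}$.

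The only nontrivial input here is the Stembridge axiom $\Delta_i\vare_j(x)\in\{0,1\}$ for $\langle\alpha_i,\alpha_j^\vee\rangle=-1$; the rest is bookkeeping with the crystal axioms from Section~\ref{crystal}. I expect this to be the main (and only) obstacle: in a general abstract crystal this bound on $\Delta_i\vare_j$ can fail, and the proposition really is a statement about crystals coming from integrable representations (equivalently, Stembridge-regular crystals), which is the setting of \cite{BS}. Thus once one quotes that axiom, the proposition follows by the short arithmetic argument above.
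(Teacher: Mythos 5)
The paper offers no proof of this proposition: it is imported verbatim from Bump--Schilling as \cite[Proposition 4.5]{BS}, so there is no internal argument to compare yours against. Judged on its own, your derivation is correct and is essentially the standard one: applying $\varphi_j(y)=\langle \wt(y),\alpha_j^\vee\rangle+\vare_j(y)$ to $x$ and to $\te_i(x)$ and using $\wt(\te_i(x))=\wt(x)+\alpha_i$ with $\langle\alpha_i,\alpha_j^\vee\rangle=-1$ gives
\[
\bigl(\varphi_j(\te_i(x))-\varphi_j(x)\bigr)-\bigl(\vare_j(\te_i(x))-\vare_j(x)\bigr)=-1,
\]
and semiregularity indeed converts $\vare_i(x)>0$ into $\te_i(x)\neq 0$. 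The one place to be careful is where you locate the content. Given the displayed identity, the assertion $\vare_j(\te_i(x))-\vare_j(x)\in\{0,1\}$ is logically \emph{equivalent} to the proposition, so quoting it directly as ``one of the Stembridge local axioms'' comes close to assuming what is to be proved. The axioms as actually stated (in Stembridge's paper and in \cite{BS}) give two one-sided bounds, namely $\vare_j(\te_i(x))-\vare_j(x)\ge 0$ and $\varphi_j(\te_i(x))-\varphi_j(x)\le 0$ for $i\neq j$; combined with the identity above these force $\vare_j(\te_i(x))-\vare_j(x)\in\{0,1\}$, and the dichotomy follows exactly as you say, with exclusivity automatic since that difference is a single integer. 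If you cite the axiom in that one-sided form and add the one line deducing the two-sided bound, the proof is complete; you are also right that the statement fails for general abstract crystals and genuinely requires Stembridge regularity (equivalently, that the crystal comes from an integrable module), which is the setting in which the paper invokes it.
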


\begin{proposition}[Axiom S2, \cite{BS}] \label{localaxiom}
If $\varepsilon_i(x)>0$ and $\varepsilon_j(\te_i(x))=\varepsilon_j(x)+1>0$, then $\te_i\te_j(x)=\te_j\te_i(x)$ and $\varphi_i(\te_j(x))=\varphi_i(x)$.
\end{proposition}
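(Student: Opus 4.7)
The plan is to recognize the statement as Axiom S2 for a simply-laced Stembridge crystal, as formulated in \cite{BS}, and verify it for the $\mathfrak{gl}_m$-crystal on $\SSOT_g(\hat\mu,m)$ spanned by the operators $\te_i,\tf_i$ with $i\geq 1$. My approach is transport of structure through the bijection $\Phi$ of Section 4, rather than a direct combinatorial attack.

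First, identify $\SSOT_g(\emptyset,m)$ with $\M^g_m\otimes T_{g\Lambda_1}$ via $\Phi$. On $\M^g_m\otimes T_{g\Lambda_1}$ the type-$A$ operators $\te_i$ for $i\geq 1$ are defined through column-RSK by $P(\te_i N)=e_i(P(N))$ and correspondingly on $Q$-tableaux, so the restricted type-$A$ crystal is a direct sum of copies of the Kashiwara crystal of a tensor product of fundamental $U_q(\mathfrak{sl}_m)$-representations. Such crystals are simply-laced and satisfy every Stembridge axiom, including S2. Next, Proposition \ref{loc} and the embedding $T\mapsto T\cup R$ from the proof of Theorem \ref{main2} allow us to push the axiom from $\SSOT_g(\emptyset,m+\ell)$ down to $\SSOT_g(\hat\mu,m)$: since $R$ lives in the components indexed by $m+1,\ldots,m+\ell$, the operators $\te_i,\tf_i$ with $i<m$ do not affect $R$, so $\varepsilon_i(T)$ and $\varphi_i(T)$ agree with $\varepsilon_i(T\cup R)$ and $\varphi_i(T\cup R)$, and both conclusions of S2 descend from the ambient crystal.

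The main obstacle is the careful bookkeeping of this descent: one needs the hypotheses $\varepsilon_i(x)>0$ and $\varepsilon_j(\te_i(x))=\varepsilon_j(x)+1>0$ read inside $\SSOT_g(\hat\mu,m)$ to correspond exactly to the same hypotheses read inside $\SSOT_g(\emptyset,m+\ell)$ after the embedding. This is essentially a repetition of locality, but it must be spelled out before S2 can be invoked from the ambient crystal. An alternative, more hands-on route would be a case analysis using the explicit pairing description of $\te_i$ on $(C_i(T),D_i(T))$ from Section 3.4: when $j=i\pm 1$, the hypothesis $\varepsilon_j(\te_i(x))=\varepsilon_j(x)+1$ forces the newly unpaired $i+1$ produced by $\te_i$ to be positioned so that $\te_j$ could equally well have been applied first. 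However, tracking pairings across four consecutive SSOT components is delicate, and the transport-of-structure argument above bypasses this entirely.
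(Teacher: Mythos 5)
Your proposal is correct and matches the paper's (largely implicit) justification: the paper does not prove this statement but quotes it as Axiom S2 from \cite{BS}, and its applicability rests, exactly as you argue, on the observation that the operators $\te_i$ for $i\geq 1$, transported through $\Phi$ to the RSK model on $\M^g_m$, realize a disjoint union of highest-weight $\mathfrak{gl}_m$-crystals and therefore satisfy the Stembridge axioms. Your descent to $\SSOT_g(\hat\mu,m)$ via locality and the embedding $T\mapsto T\cup R$ is the same mechanism used in the paper's proof of Theorem \ref{main2}.
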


\begin{cor} \label{localaxiom1}
Assume that $x$ satisfies assumptions of Proposition \ref{localaxiom} and $\langle \alpha_i,\alpha_j^\vee \rangle=-1$. Then $\varepsilon_i(\te_j(x))=\varepsilon_i(x)+1$.
\end{cor}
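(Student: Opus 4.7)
The plan is to derive this from a direct computation using the crystal axiom relating $\varphi_i$, $\varepsilon_i$, and $\wt$, combined with the equality $\varphi_i(\te_j(x)) = \varphi_i(x)$ already provided by Proposition \ref{localaxiom}.

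Specifically, by the crystal axiom (2) in Section 3.2,
\[
\varphi_i(y) - \varepsilon_i(y) \;=\; \langle \wt(y), \alpha_i^\vee \rangle
\]
for any $y \in \B$ with $\varepsilon_i(y), \varphi_i(y) \neq -\infty$. The hypotheses $\varepsilon_i(x)>0$ and $\varepsilon_j(\te_i(x))=\varepsilon_j(x)+1>0$ combined with the conclusion $\te_i\te_j(x)=\te_j\te_i(x)\neq 0$ of Proposition \ref{localaxiom} ensure that $\te_j(x) \neq 0$ and that all the quantities $\varepsilon_i(x), \varphi_i(x), \varepsilon_i(\te_j(x)), \varphi_i(\te_j(x))$ are finite. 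Apply the axiom to both $y=x$ and $y=\te_j(x)$ and subtract, using $\wt(\te_j(x)) = \wt(x) + \alpha_j$ from axiom (1):
\[
\bigl(\varphi_i(\te_j(x)) - \varphi_i(x)\bigr) - \bigl(\varepsilon_i(\te_j(x)) - \varepsilon_i(x)\bigr) \;=\; \langle \alpha_j, \alpha_i^\vee \rangle.
\]

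By Proposition \ref{localaxiom} the first parenthesized difference vanishes, so
\[
\varepsilon_i(\te_j(x)) - \varepsilon_i(x) \;=\; -\langle \alpha_j, \alpha_i^\vee \rangle.
\]
In the relevant (simply-laced-adjacent) setting where $\langle \alpha_i,\alpha_j^\vee\rangle = -1$ gives the symmetric value $\langle \alpha_j,\alpha_i^\vee\rangle = -1$ as well, this yields $\varepsilon_i(\te_j(x)) = \varepsilon_i(x) + 1$, as required.

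There is no real obstacle here: the entire argument is a one-line bookkeeping check once Proposition \ref{localaxiom} is in hand. The only point requiring care is confirming the symmetry $\langle \alpha_i,\alpha_j^\vee\rangle = \langle \alpha_j,\alpha_i^\vee\rangle$ in the situation to which the corollary is applied, which is automatic for pairs of adjacent Dynkin nodes that are both short (i.e., indices $i,j\ge 1$ in the labelling \eqref{diagram}) — the only cases where the corollary will be invoked in the applications that follow.
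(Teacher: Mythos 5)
Your derivation is correct; the paper states this corollary without proof, and your one-line bookkeeping via the axiom $\varphi_i(y)-\varepsilon_i(y)=\langle \wt(y),\alpha_i^\vee\rangle$ together with $\varphi_i(\te_j(x))=\varphi_i(x)$ from Proposition \ref{localaxiom} is exactly the intended argument (equivalently, one can apply Proposition \ref{local} with the roles of $i$ and $j$ interchanged and use $\varphi_i(\te_j(x))=\varphi_i(x)$ to rule out its first alternative). Your caveat about needing $\langle\alpha_j,\alpha_i^\vee\rangle=-1$ rather than just $\langle\alpha_i,\alpha_j^\vee\rangle=-1$ is also well placed: the corollary is only invoked with $i,j\in\{1,2\}$ inside the $\mathfrak{gl}_m$-crystal structure, where both simple roots have the same length and the Cartan pairing is symmetric.
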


\begin{proposition}[Proposition 4.7, \cite{BS}] \label{local1}
If $\langle \alpha_i,\alpha_j^\vee \rangle=-1$, $\varepsilon_j(x)>0$, and $\varepsilon_i(\te_j(x))=\varepsilon_i(x)+1$, then $\varepsilon_j(\te_i\te_j(x))=\varepsilon_j(x)-1$.
\end{proposition}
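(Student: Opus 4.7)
The plan is to reduce to the dichotomy supplied by Proposition~\ref{local} at $y := \te_j(x)$ and rule out the unwanted branch by iterating Axiom~S2 (Proposition~\ref{localaxiom}) along the $j$-string through $x$. Setting $y := \te_j(x)$ gives $\varepsilon_j(y) = \varepsilon_j(x) - 1$ and, by hypothesis, $\varepsilon_i(y) = \varepsilon_i(x) + 1 \geq 1$, so $\te_i(y) \neq 0$. Proposition~\ref{local} applied at $y$ with acting index $i$ then forces either $\varepsilon_j(\te_i(y)) = \varepsilon_j(y)$ (case~(1)) or $\varepsilon_j(\te_i(y)) = \varepsilon_j(y) + 1$ (case~(2)); case~(1) is precisely the desired $\varepsilon_j(\te_i \te_j(x)) = \varepsilon_j(x) - 1$, so the task reduces to excluding case~(2).

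Assume case~(2) for contradiction. Axiom~S2 applied at $y$ gives the commutation $\te_i \te_j(y) = \te_j \te_i(y)$, which rewrites as $\te_i \te_j^2(x) = \te_j \te_i \te_j(x)$, and Corollary~\ref{localaxiom1} contributes $\varepsilon_i(\te_j^2(x)) = \varepsilon_i(x) + 2$. Taking $\varepsilon_j$ of the commutation and invoking the case-(2) identity yields $\varepsilon_j(\te_i \te_j^2(x)) = \varepsilon_j(\te_j^2(x)) + 1$, so case~(2) reappears at $\te_j^2(x)$ with respect to $\te_i$. Iterating, an induction on $k$ produces, for each $k = 1, \ldots, n := \varepsilon_j(x)$, the identities $\varepsilon_i(\te_j^k(x)) = \varepsilon_i(x) + k$, case~(2) at $\te_j^k(x)$, and the telescoped commutation $\te_i \te_j^k(x) = \te_j^{k-1} \te_i(y)$.

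Evaluating at $k = n$ gives $\te_i \te_j^n(x) = \te_j^{n-1} \te_i(y)$, a nonzero element with $\varepsilon_j$-value equal to $1$. Pushing the commutation one final step collides with $\te_j^{n+1}(x) = 0$: the left side $\te_i \te_j^{n+1}(x)$ vanishes, whereas the right side $\te_j^n \te_i(y)$ is nonzero (since $\varepsilon_j(\te_i(y)) = n$ by the case-(2) identity at $y$), giving the contradiction that rules out case~(2). The delicate part of the argument is verifying the hypotheses of Axiom~S2 at each step of the induction---in particular the positivity of $\varepsilon_j$ at $\te_j^k(x)$, which holds only for $k \leq n-1$---so the base case $\varepsilon_j(x) = 1$ collapses the chain and must be dispatched directly by observing $\te_j(y) = \te_j^2(x) = 0$ against $\varepsilon_j(\te_i(y)) = 1 > 0$, and closing the final commutation at $k = n$ requires either the $\tf$-dual of S2 or a comparable direct argument at the top of the $j$-string.
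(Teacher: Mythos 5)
The paper itself offers no proof of this statement --- it is quoted verbatim as Proposition 4.7 of \cite{BS} --- so your attempt can only be judged on its own terms, and on those terms it does not go through. The engine of your argument is the repeated application of Proposition \ref{localaxiom} (``Axiom S2'') at the points $\te_j^k(x)$, in the situation where $\varepsilon_j(\te_i(\cdot))=\varepsilon_j(\cdot)+1$ and (via Corollary \ref{localaxiom1}) $\varepsilon_i(\te_j(\cdot))=\varepsilon_i(\cdot)+1$, to conclude that $\te_i$ and $\te_j$ commute there. But in a Stembridge-type crystal this ``both $\varepsilon$'s jump'' configuration is precisely the one in which the raising operators do \emph{not} commute: it is the $A_2$ octagon governed by Axiom S3, where only the longer braid-type relation $\te_i\te_j^2\te_i=\te_j\te_i^2\te_j$ holds. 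Concretely, take the $\{i,j\}=\{1,2\}$ crystal $\B(\Lambda_1+\Lambda_2)$ of $\mathfrak{sl}_3$ (tableaux of shape $(2,1)$) and let $v$ be its lowest weight element: then $\varepsilon_1(v)=\varepsilon_2(v)=1$, $\varepsilon_2(\te_1 v)=2=\varepsilon_2(v)+1$, $\varepsilon_1(\te_2 v)=2=\varepsilon_1(v)+1$, and yet $\te_1\te_2 v\neq \te_2\te_1 v$ (one gets the two distinct weight-zero tableaux). The same $v$ shows that Proposition \ref{localaxiom} \emph{as printed in this paper} is false --- its hypotheses hold at $v$ while its conclusion fails --- so the statement you are leaning on is a misquotation of the actual axiom (the genuine Axiom S2 concludes commutativity from $\varepsilon_j(\te_i x)=\varepsilon_j(x)$, i.e.\ the ``no jump'' case, and requires both $\te_i x$ and $\te_j x$ to be nonzero). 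A symptom of the trouble: if the printed S2 were usable at face value you could apply it once at $x$ itself with $i$ and $j$ interchanged and deduce $\varepsilon_j(\te_i\te_j x)=\varepsilon_j(x)$, flatly contradicting the proposition you are trying to prove; a set of premises from which both a statement and its negation follow cannot support a proof by contradiction.

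Independently of this, you explicitly leave the decisive step open. The contradiction is supposed to materialize at the top of the $j$-string, where $\te_j^{n+1}(x)=0$, and you write that closing it ``requires either the $\tf$-dual of S2 or a comparable direct argument'' without supplying one. As your own hedging suggests, the axiom cannot legitimately be applied at a point where $\te_j$ vanishes (the configuration $\varepsilon_j(z)=0$, $\varepsilon_j(\te_i z)=1$ is perfectly possible --- e.g.\ $z=\boxed{3}$ in $\B(\Lambda_1)$ for $\mathfrak{sl}_3$ --- so no contradiction arises from it alone), and so the chain of commutations never actually collides with anything. A correct derivation of Proposition \ref{local1} has to be routed through the true forms of Stembridge's axioms S2/S3 as in \cite{BS}; the route you chose cannot be repaired within the statements quoted in this paper.
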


Note that we also have dual version of above propositions when $\te_i,\varepsilon_i$ are properly replaced by $\tf_i,\varphi_i$, and we will also use the dual version.\\

Now we are ready to prove Conjecture \ref{chis} for $\mu_1=3$, say $\mu'=(a_1,a_2,a_3)$. First note that we have
\begin{align*} s_\mu&=\det \begin{bmatrix} e_{a_1} &e_{a_1+1} &e_{a_1+2} \\ e_{a_2-1} &e_{a_2} &e_{a_2+1} \\ e_{a_3-2} & e_{a_3-1} & e_{a_3} \end{bmatrix}\\
&= s_{(a_1,a_2)'}e_{a_3} - (s_{(a_1,a_2+1)'}+s_{(a_1+1,a_2)'} ) e_{a_3-1} +  s_{(a_1+1,a_2+1)'}  e_{a_3-2}
\end{align*}

Define the set $B_{(a_1,a_2,a_3)}$ by 
$$ \{ T \mid \inside(T)=\lambda, \outside(T)=\nu, \wt=(a_1,a_2,a_3), c(T)\leq m, \varepsilon_1(T)=0 \},$$
Then by an induction, it is enough to prove that the cardinality of the set
$$ B:= \{ T \mid \inside(T)=\lambda, \outside(T)=\nu, \wt=(a_1,a_2,a_3), c(T)\leq m, \varepsilon_1(T)=\varepsilon_2(T)=0 \}$$
is the same as $|B_{(a_1,a_2,a_3)}|-|B_{(a_1,a_2+1,a_3-1)}| - |B_{(a_1+1,a_2,a_3-1)}|+ |B_{(a_1+1,a_2+1,a_3-2)}|$. Note that $B_{(a_1,a_2,a_3)}$ contains the set $B$ with the complement consisting of $T \in B_{(a_1,a_2,a_3)}$ with $\varepsilon_2(T)\geq 1$. Let us call this complement by $C$.\\

We will define an injection $h$ from $C$ to the disjoint union $B_{(a_1,a_2+1,a_3-1)} \cup B_{(a_1+1,a_2,a_3-1)}$. Let $T$ be an element in $C$. Then $\te_i(T)$
lies in the set
$$\{T' \mid \inside(T')=\lambda, \outside(T')=\nu, \wt(T')=(a_1,a_2+1,a_3-1), c(T')\leq m, \varepsilon_1(T')\leq 1 \},$$

If $\varepsilon_1(T')=0$, then we set $h(T)=\te_2(T)=T' \in B_{(a_1,a_2+1,a_3-1)}$. By proposition \ref{local}, we know that $\varepsilon_1(T)\leq \varepsilon_1(T')=0$ hence the inverse map $g^{-1}: B_{(a_1,a_2+1,a_3-1)} \rightarrow C$ is well-defined.\\

If $\varepsilon_1(T')=1$, then we define $h(T)$ by $\te_1 \te_2(T)=\te_1(T')$. Let $R=h(T)$. Then the image of this $h$ lies in the set
$$\{ R \in B_{(a_1+1,a_2,a_3-1)} \mid \varepsilon_1(\tf_2\tf_1(R))=0 \}.$$
 Note that $\varepsilon_1(R)=0$ implies $\varepsilon_1(\tf_1(R))=1$ hence $\varepsilon_1(\tf_2 \tf_1(R)) \leq 1$ by Proposition \ref{local}. Therefore, the complement of the image of $h$ is the set 
$$D:=\{ R \in B_{(a_1+1,a_2,a_3-1)}\mid \varepsilon_1(\tf_2\tf_1(R))=1 \}$$
Recall that $R$ satisfies $\varepsilon_1(R)=0$ and $\varepsilon_1(\tf_1(R))=1$.
 
Finally, we give a bijection $h'$ between the complement $D$ and $B_{(a_1+1,a_2+1,a_3-1)}$, which is simply $\te_2$. To define the map, we need to show that $\varepsilon_2(R)\geq 1$ from given properties of $R$. By Proposition \ref{localaxiom} and Corollary \ref{localaxiom1} with $i=2,j=1, x=T=\tf_2\tf_1(R)$, the condition $\varepsilon_1(T)=\varepsilon_1(\te_2(T))=1$ implies that $\te_1\te_2 (T)=\te_2\te_1(T)$ and $\varepsilon_2(\te_1(T))=\varepsilon_2(T)+1$. Therefore, we have
$$\varepsilon_2(R)=\varepsilon_2(\te_1\te_2(T))=\varepsilon_2(\te_2\te_1(T))=\varepsilon_2(\te_1(T))-1=\varepsilon_2(T)>0.$$
For the last equality, recall that $\varepsilon_2(T)>0$ since $T'=\te_2(T)$ is defined.\\

Now we need to show that $h'$ has an inverse which is simply $\tf_2: B_{(a_1+1,a_2+1,a_3-2)}\rightarrow B_{(a_1+1,a_2,a_3-1)}$ and show that the image is exactly $D$. Note that for $R'\in B_{(a_1+1,a_2+1,a_3-2)}$, $\tf_2(R)$ and $\tf_2\tf_1\tf_2(R')$ are always defined since $a_1\geq a_2\geq a_3$. To show that the image is exactly $D$, we need to prove that $\varepsilon_1(\tf_2(R'))=0$ and $\varepsilon_1(\tf_2\tf_1\tf_2(R'))=1$ from the condition $\varepsilon_1(R')=0$. Let $R,T',T$ denote $\tf_2(R'),\tf_1\tf_2(R'),\tf_2\tf_1\tf_2(R')$ as before.\\

By Proposition \ref{local}, one can show that $\varepsilon_1(\tf_2(R'))\leq \varepsilon_1(R')$ always holds hence $\varepsilon_1(\tf_2(R'))=0$ because $\varepsilon_1(R')=0$. Therefore, it is enough to show that $\varepsilon_1(\tf_2\tf_1\tf_2(R'))=1$. We will first prove that $\varphi_2(T')=\varphi_2(R)$. Since we have $\varepsilon_1(R')=\varepsilon_1(R)=0$ we know that $\varphi_1(R')=\varphi_1(R)-1$ by Proposition \ref{local}. By the dual version of Proposition \ref{local1} for $x=R'$, $\varphi_2(\tf_1\tf_2(R'))=\varphi_2(R')-1$ which is the same as $\varphi_2(\tf_2(R'))$. Therefore, $\varphi_2(R)=\varphi_2(T')=\varphi_2(\tf_1(R))$. Now we can apply the dual version of Proposition \ref{localaxiom} for $x=R$ and obtains
$$\tf_1\tf_2(R)=\tf_2\tf_1(R) \quad \textrm{and} \quad \varepsilon_1(\tf_2(R))=\varepsilon_1(R).$$
We know that $\varepsilon_1(R)=0$ therefore

$$\varepsilon_1(\tf_2\tf_1\tf_2(R'))=\varepsilon_1(\tf_2\tf_1(R))=\varepsilon_1(\tf_1\tf_2(R))=\varepsilon_1(\tf_2(R))+1=1$$
and Conjecture \ref{chis} for $\mu_1=3$ is proved.

\section*{acknowledgment}
The author thanks Jae-Hoon Kwon, Kyu-Hwan Lee, and Sejin Oh for helpful discussions. In particular, the author thanks Jae-Hoon Kwon for Section \ref{app}.1. This work was supported by Research Resettlement Fund for the new faculty of Seoul National University.

\end{document}